\newcommand\fermatspiral{} 
\def\fermatspiral[#1](#2)(#3:#4)[#5]{%
	\pgfmathsetmacro{\domain}{#4+#5*360}
	\draw [#1,
	shift={(#2)},
	domain=#3*pi/180:\domain*pi/180,
	variable=\t,
	smooth,
	samples=int(\domain/5)] plot ({\t r} : {sqrt(\t)})
}
\numberwithin{equation}{section}
\newcommand{\rr}{{\mathbb R}}
\newcommand{\R}{{\mathbb R}}
\newcommand\loc{{\mathop\mathrm{\,loc\,}}}
\newcommand\diam{{\mathop\mathrm{\,diam\,}}}
\newcommand*{\bR}{\ensuremath{\mathbb{R}}}
\numberwithin{equation}{section}
\newtheorem{theorem}{Theorem}[section]
\newtheorem{lemma}[theorem]{Lemma}
\newtheorem{proposition}[theorem]{Proposition}
\theoremstyle{definition}
\newtheorem{definition}[theorem]{Definition}
\newtheorem{remark}[theorem]{Remark}
\newtheorem{example}[theorem]{Example}
\def\XXint#1#2#3{{\setbox0=\hbox{$#1{#2#3}{\int}$}
		\vcenter{\hbox{$#2#3$}}\kern-.5\wd0}}
\begin{document}
	\title[Boundary extensions of mappings between metric spaces]{Boundary extensions for mappings between metric spaces}
	\author[Y.-L Tian and Y. Xuan]{Yao-Lan Tian and Yi Xuan}
	
	\address[Yao-Lan Tian]{Center for Optics Research and Engineering, Shandong University, Qingdao 266237, P. R. China}
	\email{tianylsdu@sdu.edu.cn}

	\address[Yi Xuan]{Research Center for Mathematics, Shandong University 266237,  Qingdao, P. R. China and  Frontiers Science Center for Nonlinear Expectations, Ministry of Education, P. R. China}
	\email{yixuan@sdu.edu.cn}
%
%
%
	
	\thanks{The authors are supported by the Young Scientist Program of the Ministry of Science and Technology of China (No.~2021YFA1002200), the National Natural Science Foundation of China (No.~12101362) and the Natural Science Foundation of Shandong Province (No.~ZR2021QA003).}

\begin{abstract}
In this paper, we consider boundary extensions of two classes of mappings between metric measure spaces. These two mapping classes extend in particular the well-studied geometric mappings such as quasiregular mappings with integrable Jacobian determinant and mappings of exponentially integrable distortion with integrable Jacobian determinant. Our main results extend the corresponding results of \"Akkinen and Guo [Ann. Mat. Pure. Appl. 2017] to the setting of metric measure spaces.
\end{abstract}

\maketitle

{\small
\keywords {\noindent {\bf Keywords:} Uniform domains, $\varphi$-length John domains, Dyadic-Whitney decomposition, limits along John curves, quasiregular mappings.}
\smallskip
\newline
\subjclass{\noindent {\bf 2010 Mathematics Subject Classification: 49N60; 58E20}   }
}
\bigskip

\arraycolsep=1pt

\section{Introduction}

Let $\Omega\subset\bR^n$($n\geq 2$) be a domain.
A mapping $f\colon\Omega\to\bR^n$ is called a mapping of finite distortion if the following three conditions are satisfied:
\begin{enumerate}
	\item $f\in W^{1,1}_{\loc}(\Omega,\bR^n)$,
	\item $J_f=\textup{det}(Df)\in L^1_\text{loc}(\Omega)$,
	\item there exists a measurable function $K_f\colon\Omega\to[1,\infty]$ so that for almost every $x\in\Omega$, \[K_f(x)<\infty,\] and
\[|Df(x)|^n\leq K_f(x)J_f(x),\]
\end{enumerate}
where $|\cdot|$ is the operator norm. If $K_f\leq K<\infty$ almost everywhere, we say that $f$ is $K$-quasiregular. If $n=2$ and $K=1$, we recover the concept of complex analytic functions. A mapping $f$ of finite distortion is called a mapping of exponentially integrable distortion if $\exp(\lambda K_f)\in L^1(\Omega)$ for some $\lambda>0$.

The theory of quasiregular mappings, initiated by the work of Reshetnyak, Martio, Rickman and V\"ais\"al\"a, shows that they form, from the geometric
function theoretic point of view, the correct generalization of the class of analytic functions to higher dimensions. In particular, Reshetnyak proved that non-constant quasiregular mappings are continuous, discrete and open, and that they preserve sets of measure zero; see \cite{r89}, \cite{r93} for the theory of quasiregular mappings. This theory was extended to the setting of mappings of exponentially integrable distortion in the fundamental work of Iwaniec, Koskela and Onninen \cite{iko01} and attracted great attension; see \cite{hk14,im01} for a comprehensive introduction of this theory.

In this article, we are mainly interested in the boundary behavior of this class of mappings in the setting of metric measure spaces. Recall that a classical theorem of Fatou states bounded analytic functions defined on
the unit disc have radial limits at almost every boundary point. It is a well-known open problem whether this classical theorem remains valid  for quasiregular mappings in higher dimensions $n\geq 3$. It is indeed even unknown whether bounded quasiregular maps have radial limits at any point on the boundary of the unit ball. A substantial progress on this problem was given by Rajala \cite{r08}, who proved that quasiregular local homeomorphisms have radial limits at infinitely many boundary points of the unit ball. This result was extended recently by Guo and Xuan \cite{gx23} to quasiregular local homeomorphisms defined on more general domains. Adding certain growth assumptions on the multiplicity functions, Martio and Rickman \cite{mr72} proved the existence of radial limits of quasiregular mappings at almost every boundary point. This result was further extended by \"Akkinen \cite{a14} to mappings of finite distortion defined on the unit ball, by \"Akkinen and Guo \cite{ag17} to mappings of finite distortion defined on general John domains and by Cristea \cite{c23} for quasiregular mappings defined on Riemannian manifolds. For homeomorphisms of finite distortion, an essentially sharp result along this direction was obtained earlier by Koskela and Nieminen \cite{kn08}; see also \cite{ms75} for examples of quasiregular mappings without radial limits at any boundary point.

We now turn to the setting of metric measure spaces. The theory of quasiconformal mappings in the setting of metric measure spaces was initiated by Heinonen and Koskela in \cite{hk98} and was further developed in \cite{bkr07,w12,w14} (see also the references therein for more related works). The theory of quasiregular mappings and mappings of finite distortion in singular metric spaces was initiated by Heinonen and Holopainen \cite{hh97} (for Carnot groups) and was later extended to more general setting in \cite{hr02,or09,c06,g15b,gw16}.

Let $X_{1}=(X_{1},d_{1},\nu_{1})$, $X_{2}=(X_{2},d_{2},\nu_{2})$ be two path-connected complete metric measure spaces and $\Omega$ be a bounded domain of $X_{1}$. Through this paper, we shall assume that $\Omega\subset X_{1}$ is proper and Ahlfors $q$-regular (see Section \ref{sec:pre} below for precise definitions). For the statement of our main result, we introduce the following two classes of mappings.

\begin{definition}\label{def 1.1}
Let $f:\Omega\rightarrow X_{2}$ be a mapping and $\sigma\geq 1$ be a constant.
\begin{enumerate}
\item We say that $f$ belongs to the class $\mathcal{A}_{1}(\sigma)$, if there exist a function $\alpha\in L^{1}(\Omega)$ and a constant $C\geq 1$, such that
$$\diam f(B)\leq C\bigg(\int_{\sigma B}\alpha(y)d\nu_{1}(y)\bigg)^{\frac{1}{q}}$$
for every $B=B(x,r)$ for which $\sigma B\subset\subset\Omega$.
		
\item We say that $f$ belongs to the class $\mathcal{A}_{2}(\sigma)$,  if there exist a function $\alpha\in L^{1}(\Omega)$ and a constant $C\geq 1$ such that
    $$\diam f(B)\leq C\bigg(\int_{\sigma B}\alpha(y)d\nu_{1}(y)\bigg)^{\frac{1}{q}}\log\bigg(\frac{1}{\diam B}\bigg)^{\frac{1}{q}}$$
    for every $B=B(x,r)$ for which $\sigma B\subset\subset\Omega$.
	\end{enumerate}
\end{definition}
In the setting of Euclidean spaces, the class $\mathcal{A}_1$ contains all quasiregular mappings with integrable Jacobian determinant between Euclidean domains, while $\mathcal{A}_2$ contains mappings of exponentially integrable distortion with integrable Jacobian determinant. In both cases, $\alpha$ can be taken as the Jacobian determinant $J_f$ of $f$; see for instance \cite{ag17,kn08}. We shall show in Example \ref{exam:two classes} below that this extends to the setting of sufficiently nice metric measure spaces as well.


Next, we introduce the class of $\varphi$-length John domains that were initially introduced in \cite{gk14}.
\begin{definition}\label{length john curve}
	Let $\varphi$ be a continuous, increasing function with $\varphi(0)=0$ and $\varphi(t) \geq t$ for all $t>0$. A bounded domain $\Omega\subset X_{1}$ is called a $\varphi$-length John domain with center $x_{0}\in\Omega$ if there exists a constant $c\geq1$ such that for each point $x\in\Omega$, there exists a curve $\gamma:[0,1]\rightarrow\Omega$ satisfying $\gamma(0)=x,\gamma(1)=x_{0}$ and for all $t\in[0,1]$, \begin{equation}\label{ccc}
	\varphi(cd_{1}(\gamma(t), \partial \Omega)) \geq \ell(\gamma([0,t])).
\end{equation}
\end{definition}
The curve $\gamma$ appearing in Definition \ref{length john curve} is called the $\varphi$-length John curve connecting $x$ to the center $x_0$ and it needs not to be unique. Given $\xi\in\partial\Omega$, the set $I^{(\varphi,c)}(\xi,x_{0})$ consists of all $\varphi$-length John curves in $\Omega$ connecting $\xi$ to $x_{0}$ with constant $c$ in the above inequality (see Lemma \ref{connect to boundary} below for the existence).

When $\varphi(t)=t$,  it reduces to the well-known class of $c$-John domains with center $x_0$ (see for instance \cite{hak00} for more on John domains in the setting of metric spaces), where $c$ comes from \eqref{ccc}. When $\varphi(t)=t$, we simply write $I^{c}(\xi,x_{0})$, instead of $I^{(\varphi,c)}(\xi,x_0)$, to denote the class of all $c$-John curves connecting $\xi$ to $x_{0}$, where $c$ is also from \eqref{ccc}.

The purpose of this paper is to prove that the functions $f\in\mathcal{A}_{i}(\sigma),$ $i=1,2$, for a sufficiently large portion of the boundary, have limits along $\varphi$-length John curves. Recall that the generalized Hausdorff $h$-measure of a set $E$ in a metric space is defined as
\[H^h(E)=\lim_{r\rightarrow 0}\inf \left\{\sum h(\diam E_i):E\subset \bigcup_i E_i, \diam E_i\leq r, i\in \mathbb{N}\right\},\]
where $E_i$ are subsets of $E$ and the gauge function $h:[0,\infty)\rightarrow [0,\infty)$ is continuous and increasing such that $h(0)=0$. If $h(t)=t^\alpha$ for some $\alpha>0$, then this concept returns to the usual concept of $\alpha$-dimensional Hausdorff measure. Notice that we say a positive function $h(t)$ with $t>0$ is doubling if there exists a constant $C$ such that
$$h(2t)\leq Ch(t) \quad \text{for all } t>0.$$
\begin{theorem}\label{thm:limit exist}
Let $\Omega\subset X_{1}$ be a $\varphi$-length John domain with center $x_{0}$. Suppose $\Omega$ is Ahlfors $q$-regular. Given a map $f:\Omega\rightarrow X_{2}$, let $E_{f}$ be the set of points $\omega\in\partial\Omega$ for which there exists a curve $\gamma\in I^{(\varphi,c)}(\omega,x_{0})$ so that $f$ does not have a limit along $\gamma$. Then there is some $\sigma\geq 1$ depending only on the data associated to $\Omega$ with the following properties:
	\begin{enumerate}
		\item Let h be a doubling gauge function such that $$\int_{0}^{1} {\left[\frac{\varphi(t) }{t}\right]}^qh(\varphi(t))^ {\frac{1}{q-1}} \frac{dt}{t}<\infty.$$
		If $f\in\mathcal{A}_{1}(\sigma)$, then $\mathcal{H}^{h}(E_{f})=0$.
		\item Let h be a doubling gauge function such that \begin{equation}\label{210}\int_{0}^{1} {\left[\frac{\varphi(t) }{t}\right]}^qh(\varphi(t))^ {\frac{1}{q-1}}\left[\log \frac{1}{t}\right]^{\frac{1}{q-1}} \frac{dt}{t}<\infty.\end{equation}
		If $f\in\mathcal{A}_{2}(\sigma)$, then $\mathcal{H}^{h}(E_{f})=0$.
	\end{enumerate}
\end{theorem}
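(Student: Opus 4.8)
I would prove this by a dyadic--Whitney covering argument, in the spirit of Martio--Rickman and \"Akkinen--Guo, pushing the oscillation of $f$ along John curves down to the boundary and counting ``bad'' Whitney balls through the $L^{1}$-bound on $\alpha$.

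\textbf{First, a reduction.} Since $X_{2}$ is complete, $f$ fails to have a limit along a curve $\gamma\in I^{(\varphi,c)}(\omega,x_{0})$ exactly when $\lim_{s\to 0^{+}}\diam f(\gamma((0,s)))>0$ (this quantity being non-increasing in $s$). Hence $E_{f}=\bigcup_{m\ge 1}E_{f}^{m}$, where $E_{f}^{m}$ is the set of $\omega\in\partial\Omega$ admitting some $\gamma\in I^{(\varphi,c)}(\omega,x_{0})$ with $\diam f(\gamma((0,s)))\ge 1/m$ for all $s>0$, and it suffices to show $\mathcal H^{h}(E_{f}^{m})=0$ for each fixed $m$. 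Fixing $m$ and given $r_{0},\eta>0$, I would produce a cover of $E_{f}^{m}$ by sets of diameter $\le r_{0}$ of total $h$-mass $\le\eta$.

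\textbf{Next, Whitney balls, shadows and the oscillation bound.} Take a dyadic--Whitney decomposition $\mathcal W=\bigcup_{k}\mathcal W_{k}$ of $\Omega$: $B\in\mathcal W_{k}$ has $r_{B}\asymp\dist(x_{B},\partial\Omega)\asymp 2^{-k}$, the dilates $\{\sigma B\}$ have bounded overlap, and $\nu_{1}(B)\asymp r_{B}^{q}$ by Ahlfors regularity. For $B\in\mathcal W$ set $\mathrm{Sh}(B)=\{\omega\in\partial\Omega:\text{some }\gamma\in I^{(\varphi,c)}(\omega,x_{0})\text{ meets }B\}$. From \eqref{ccc}, a John curve reaching $B\in\mathcal W_{k}$ has length $\lesssim\varphi(c\,2^{-k})$ up to that moment, so $\diam\mathrm{Sh}(B)\lesssim\varphi(2^{-k})$; the same estimate shows such a curve meets at most $M_{k}\asymp\varphi(2^{-k})/2^{-k}$ balls of $\mathcal W_{k}$ inside a fixed collar of $\partial\Omega$, whereas a $\varphi(2^{-k})$-ball of the $q$-regular space contains $\asymp M_{k}^{q}$ of them. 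For $\omega\in E_{f}^{m}$ with curve $\gamma$, covering the connected tail of $\gamma$ by the Whitney balls it meets (consecutive ones overlapping) and chaining gives, for all small $s$,
\[
\frac1m\ \le\ \diam f(\gamma((0,s)))\ \le\ \sum_{B\in\mathcal W,\ B\cap\gamma\ \text{near }\omega}\diam f(B),
\]
and inserting the defining inequality of $\mathcal A_{1}(\sigma)$ (resp.\ $\mathcal A_{2}(\sigma)$), $\diam f(B)\le C\bigl(\int_{\sigma B}\alpha\bigr)^{1/q}$ (resp.\ with the extra factor $[\log(1/\diam B)]^{1/q}$), turns this into $1/m\lesssim\sum_{k}\sum_{B\in\mathcal W_{k},\ B\cap\gamma\neq\varnothing}\bigl(\int_{\sigma B}\alpha\bigr)^{1/q}$ (resp.\ weighted by the logarithm). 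Note also, for $\mathcal A_{1}(\sigma)$, that $f$ is continuous on $\Omega$ (immediate from this inequality and absolute continuity of the integral), so ``limit along $\gamma$'' is the right notion.

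\textbf{Then, the covering/capacity step.} Applying Hölder's inequality generation by generation (using the count $M_{k}$) and then across generations with a well-chosen weight sequence, one extracts from the last bound that every $\omega\in E_{f}^{m}$ lies in $\mathrm{Sh}(B)$ for some $B\in\mathcal W_{k}$, $k\ge j$, with $\int_{\sigma B}\alpha>\tau_{k}$, where $\tau_{k}$ is an explicit threshold depending only on $k$ (and mildly on $m,j$). The shadows have diameter $\lesssim\varphi(2^{-k})\to 0$, so the mesh is $\le r_{0}$ for $j$ large; by bounded overlap of $\{\sigma B\}$ the number of such ``bad'' balls of generation $k$ is $\lesssim\tau_{k}^{-1}\int_{\{\dist(\cdot,\partial\Omega)\asymp 2^{-k}\}}\alpha$, so the $h$-mass of the cover is
\[
\lesssim\ \sum_{k\ge j}\tau_{k}^{-1}\,h\bigl(\varphi(2^{-k})\bigr)\int_{\{\dist(\cdot,\partial\Omega)\asymp 2^{-k}\}}\alpha .
\]
The point is that the weights can be arranged so that $\tau_{k}^{-1}h(\varphi(2^{-k}))$ stays bounded exactly under the hypothesis on $h$: the factor $(\varphi(t)/t)^{q}=M_{k}^{q}$ absorbs the number of generation-$k$ Whitney balls filling a shadow-sized ball, while the exponent $1/(q-1)$ is the Hölder conjugate exponent appearing when the oscillation is spread along John curves of length $\asymp\varphi(2^{-k})$. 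With such a choice the $h$-mass is $\lesssim\sum_{k\ge j}\int_{\{\dist\asymp 2^{-k}\}}\alpha\le\eta$ for $j$ large (absolute continuity of $\int\alpha$), giving $\mathcal H^{h}(E_{f}^{m})=0$. For $f\in\mathcal A_{2}(\sigma)$ the argument is identical, the factor $[\log(1/\diam B)]^{1/q}\asymp(\log 2^{k})^{1/q}$ riding through the Hölder steps to reappear as $[\log(1/t)]^{1/(q-1)}$ in \eqref{210}.

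\textbf{The main obstacle} is precisely this weight bookkeeping: one must combine the two Hölder losses --- over generations, and over the $\le M_{k}$ Whitney balls a single John curve meets at a given generation versus the $\asymp M_{k}^{q}$ balls that fill a shadow --- so that they aggregate into exactly $(\varphi(t)/t)^{q}h(\varphi(t))^{1/(q-1)}$ and not a strictly stronger quantity, which forces a careful separation of ``what one curve sees'' from ``what fills the shadow''. A convenient way to package this is to recast the estimate as a comparison between a $q$-modulus (equivalently $q$-capacity) bound for the family of failing John curves, which is $\lesssim\|\alpha\|_{L^{1}(\{\dist<2^{-j}\})}\to 0$, and the standard lower bound for the $q$-modulus of curves joining a set of positive $\mathcal H^{h}$-measure to the interior through a boundary collar; the gauge condition is exactly the threshold at which these are compatible. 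The only other inputs are the existence of $\varphi$-length John curves to $\partial\Omega$ (Lemma \ref{connect to boundary}) and the standard Whitney/chaining facts in Ahlfors regular doubling spaces.
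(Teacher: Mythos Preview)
Your overall architecture---Dyadic--Whitney decomposition, shadow estimates, chaining $\diam f(B)$ along the curve and feeding in the $\mathcal A_i(\sigma)$ inequality---matches the paper's, but the \emph{covering/threshold} step you propose does not produce the gauge condition stated in the theorem. Concretely, if you declare a ball $B\in\mathcal W_k$ ``bad'' when $\int_{\sigma B}\alpha>\tau_k$ and cover $E_f^m$ by shadows of bad balls, then the constraint that ``all balls good'' must fail forces $\sum_k M_k\,\tau_k^{1/q}\gtrsim 1/m$ (since a single curve meets $\lesssim M_k$ balls at generation $k$), while the cover has $h$--mass $\lesssim \sum_k \tau_k^{-1}h(\varphi(2^{-k}))\int_{\{\dist\asymp 2^{-k}\}}\alpha$. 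Optimizing by taking $\tau_k\asymp h(\varphi(2^{-k}))$ leads to the hypothesis
\[
\int_0^1 \frac{\varphi(t)}{t}\,h(\varphi(t))^{1/q}\,\frac{dt}{t}<\infty,
\]
which is \emph{strictly stronger} than the paper's $\int_0^1 (\varphi(t)/t)^q\,h(\varphi(t))^{1/(q-1)}\,dt/t<\infty$ (already in the linear case $\varphi(t)=t$, your condition is $\sum h_k^{1/q}<\infty$ versus the paper's $\sum h_k^{1/(q-1)}<\infty$). The ``weight bookkeeping'' you flag as the main obstacle is therefore a genuine gap, not just a bookkeeping nuisance: a per--ball Chebyshev threshold cannot simultaneously exploit the two shadow facts ($\diam S(Q)\lesssim\varphi(2^{-k})$ and $\#\{Q\in\mathcal W_k:\omega\in S(Q)\}\lesssim M_k^q$) in the right proportion.

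The paper closes this gap by abandoning the direct covering and arguing by contradiction through \emph{Frostman's lemma}: if $\mathcal H^h(A_\infty)>0$, take a Frostman measure $\mu$ on $A_k=\{\omega:\ell_d[f(\gamma_\omega)]\ge k\}$ with $\mu(B(x,r))\le h(r)$, integrate the discrete--length bound against $\mu$, and apply H\"older to obtain
\[
k\,\mu(A_k)\ \le\ C\Big(\sum_j\sum_{Q\in\mathcal W_\Omega^j}\mu(S(Q))^{q/(q-1)}\,j^{1/(q-1)}\Big)^{(q-1)/q}\Big(\sum_Q\int_{\sigma B^Q}\alpha\Big)^{1/q}.
\]
Now the elementary inequality $\sum_Q\mu(S(Q))^{q/(q-1)}\le(\max_Q\mu(S(Q)))^{1/(q-1)}\sum_Q\mu(S(Q))$ uses \emph{both} pieces of shadow information at once: $\max_Q\mu(S(Q))\le h(C\varphi(\delta^j))$ from Frostman and $\sum_{Q\in\mathcal W_\Omega^j}\mu(S(Q))\lesssim M_j^{\,q}\,\mu(A_k)$ from Lemma~\ref{cedu}. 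This is exactly what makes the exponent come out as $h(\varphi(t))^{1/(q-1)}$ together with $(\varphi(t)/t)^q$, giving $k\,\mu(A_k)^{1/q}\le C$ and the desired contradiction. Your $q$--modulus reformulation in the last paragraph points in the right direction, but as written it is a heuristic; to actually reach the stated gauge you should replace the threshold/cover step by the Frostman argument above.
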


Theorem \ref{thm:limit exist} can be regarded as a natural extension of \cite[Theorem 3.5]{ag17} and \cite[Corollary 1.3]{kn08} from Euclidean domains to domains in general metric measure spaces, and from linear John domains to general nonlinear $\varphi$-length John domains.

In case $\Omega$ is a uniform domain, we are able to prove the uniqueness of limits. Recall that a bounded domain $\Omega\subset X_{1}$ is called a uniform domain if there exists a constant $c>0$ such that each pair of points $x_{1},x_{2}\in\Omega$ can be joined by a curve $\gamma$ in $\Omega$ with the following two properties:
\begin{align}
\ell(\gamma) &\leq cd_{1}(x_{1},x_{2}), \\
	\min_{i=1,2}\ell(\gamma(x_{i},x)) &\leq cd_{1}(x,\partial\Omega)\text{ for all } x\in \gamma.
\end{align} It is not hard to prove that uniform domains are John domains. As in \cite{ag17}, we say that $\Omega$ is a $c$-uniform domain with center $x_{0}$, if $\Omega$ is a uniform domain and, as a John domain, it is $c$-John with center $x_{0}$.

Our second main result reads as follows, which generalizes \cite[Theorem 3.10]{ag17}.
\begin{theorem}\label{thm:limit is unique}
Let $X_1$ be a metric measure space and $\Omega \subset X_{1}$ be a c-uniform domain with center $x_{0}$. Assume that $\Omega$ is Ahlfors $q$-regular and $h$ is a doubling gauge function satisfying \[\int_{0}^{1} \frac{\left[h(t)\log \frac{1}{t}\right]^{\frac{1}{q-1}}}{t} d t<\infty.\] There is some $\sigma\geq 1$ depending only on the data of $\Omega$ such that if $f\in\mathcal{A}_{2}(\sigma)$, then $f$ has unique limits along all curves $\gamma \in I^{c}\left(w, x_{0}\right)$ for $\mathcal{H}^{h}$-almost every $\omega \in \partial \Omega$, that is, for any $\gamma, \eta \in I^{c}\left(\omega, x_{0}\right)$ satisfying
 	$$
 	\lim _{t \rightarrow 0^{+}} f(\gamma(t))=a \text { and } \lim _{t \rightarrow 0^{+}} f(\eta(t))=b,
 	$$
 we have $a=b$.
 \end{theorem}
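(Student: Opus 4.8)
\emph{Proof plan.} The plan is to combine the existence of boundary limits, already supplied by Theorem~\ref{thm:limit exist}, with a quantitative oscillation estimate along the boundary that forces any two such limits to coincide; the uniformity of $\Omega$ is used decisively in the oscillation estimate. First, note that a $c$-uniform domain with center $x_0$ is in particular a $\varphi$-length John domain with center $x_0$ for $\varphi(t)=t$, and that for this choice of $\varphi$ the integrability condition \eqref{210} becomes precisely the hypothesis $\int_0^1[h(t)\log(1/t)]^{1/(q-1)}t^{-1}\,dt<\infty$ of the present theorem. Hence Theorem~\ref{thm:limit exist}(2), applied with the associated constant $\sigma$, produces an $\mathcal H^h$-null set $E_f\subset\partial\Omega$ outside of which $f$ has a (a priori curve-dependent) limit along every $\gamma\in I^c(\omega,x_0)$. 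It then remains to discard a further $\mathcal H^h$-null set after which these limits are curve-independent.

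The core step is the following oscillation bound. Fix $\omega\in\partial\Omega$ and $\gamma,\eta\in I^c(\omega,x_0)$. For all large $k$, by the intermediate value theorem (taking, e.g., the smallest such parameter) choose $t_k,s_k\to0^+$ with $d_1(\gamma(t_k),\omega)=d_1(\eta(s_k),\omega)=2^{-k}$, and set $x_k=\gamma(t_k)$, $y_k=\eta(s_k)$; the defining inequality \eqref{ccc} of a John curve (with $\varphi(t)=t$) gives $d_1(x_k,\partial\Omega),d_1(y_k,\partial\Omega)\ge 2^{-k}/c$. Joining $x_k$ to $y_k$ by a uniform curve $\beta_k$ yields $\ell(\beta_k)\le c\,d_1(x_k,y_k)\le 2c\,2^{-k}$, and a short case analysis using the cigar inequality together with the lower bounds on $d_1(x_k,\partial\Omega),d_1(y_k,\partial\Omega)$ shows that \emph{every} point $z\in\beta_k$ satisfies $2^{-k}/(2c^2)\le d_1(z,\partial\Omega)\le(2c+1)2^{-k}$. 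Consequently $\beta_k$ is covered by a chain $B_1,\dots,B_{M_k}$ of consecutively overlapping balls $B_i=B(z_i,r_i)$ with $z_i\in\beta_k$, $r_i=d_1(z_i,\partial\Omega)/(2\sigma)$ (so $\sigma B_i\subset\subset\Omega$), $x_k\in B_1$, $y_k\in B_{M_k}$, $\sigma B_i\subset B(\omega,C_0 2^{-k})$ for some $C_0=C_0(c)$, and — since all radii are comparable to $2^{-k}$ while $\ell(\beta_k)\lesssim 2^{-k}$ — with $M_k\le M=M(c,\sigma)$. Feeding the $B_i$ into the definition of $\mathcal A_2(\sigma)$, using $\log(1/\diam B_i)\le Ck$, and telescoping along the chain gives, for all large $k$,
\[
 d_2(f(x_k),f(y_k))\ \le\ \sum_{i=1}^{M_k}\diam f(B_i)\ \le\ C\sum_{i=1}^{M_k}\Big(\int_{\sigma B_i}\alpha\,d\nu_1\Big)^{1/q}\Big(\log\tfrac1{\diam B_i}\Big)^{1/q}\ \le\ C_1\Big(k\int_{B(\omega,C_0 2^{-k})\cap\Omega}\alpha\,d\nu_1\Big)^{1/q},
\]
with $C_1$ depending only on the data of $\Omega$.

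Next I would show that $F=\{\omega\in\partial\Omega:\liminf_{k\to\infty}k\int_{B(\omega,C_0 2^{-k})\cap\Omega}\alpha\,d\nu_1>0\}$ is $\mathcal H^h$-null. Write $F$ as the countable union of the sets $F_{n,k_0}=\{\omega: k\int_{B(\omega,C_0 2^{-k})\cap\Omega}\alpha\,d\nu_1>1/n\ \text{for all }k\ge k_0\}$, so it suffices to treat each $F_{n,k_0}$. For fixed $m\ge k_0$, apply the $5r$-covering lemma to $\{B(\omega,C_0 2^{-m}):\omega\in F_{n,k_0}\}$ to obtain a disjoint subfamily $\{B(\omega_j,C_0 2^{-m})\}_j$ whose fivefold dilates still cover $F_{n,k_0}$; disjointness and the mass bound $\int_{B(\omega_j,C_0 2^{-m})\cap\Omega}\alpha\,d\nu_1>1/(nm)$ force $\#\{j\}\le nm\,\|\alpha\|_{L^1(\Omega)}$, whence, $h$ being doubling,
\[
 \mathcal H^h_{10C_0 2^{-m}}(F_{n,k_0})\ \le\ \#\{j\}\,h(10C_0 2^{-m})\ \le\ C\,nm\,h(2^{-m}).
\]
A dyadic comparison shows the hypothesis on $h$ implies $\sum_m[m\,h(2^{-m})]^{1/(q-1)}<\infty$, hence $m\,h(2^{-m})\to0$, and letting $m\to\infty$ gives $\mathcal H^h(F_{n,k_0})=0$, so $\mathcal H^h(F)=0$. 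Finally, for $\omega\in\partial\Omega\setminus(E_f\cup F)$ every $\gamma\in I^c(\omega,x_0)$ has a limit, and if $f(\gamma(t))\to a$ and $f(\eta(t))\to b$ as $t\to0^+$, then choosing (since $\omega\notin F$) a subsequence $k_j$ with $k_j\int_{B(\omega,C_0 2^{-k_j})\cap\Omega}\alpha\,d\nu_1\to0$ and passing to the limit in the oscillation bound yields $d_2(a,b)=0$; as $\mathcal H^h(E_f\cup F)=0$, this is the assertion of the theorem.

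The step I expect to be the main obstacle is the oscillation estimate, and within it the geometric claim that the connecting uniform curve $\beta_k$ stays at one Whitney scale $\approx 2^{-k}$ along its entire length. This is exactly where it matters that $x_k$ and $y_k$ lie on John curves issuing from $\omega$ — which keeps them a definite distance $\gtrsim 2^{-k}$ from $\partial\Omega$, rather than merely being points close to $\omega$ — and it is what confines the Whitney chain to a bounded number of balls at a single scale, so that the extra logarithmic factor in the $\mathcal A_2$ bound contributes only one power of $k$ and does not accumulate over dyadic scales. Once this is in place, the covering argument controlling $F$ is routine, and the sole point at which the precise form of the gauge hypothesis is needed is to conclude $m\,h(2^{-m})\to0$.
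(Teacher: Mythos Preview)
Your proof is correct and takes a genuinely different route from the paper's for the uniqueness step. Both arguments begin identically --- invoking Theorem~\ref{thm:limit exist} with $\varphi(t)=t$ to secure existence of limits off an $\mathcal H^h$-null set $E_f$ --- and both connect $\gamma(t_1)$ to $\eta(t_2)$ at a fixed scale by a chain of uniformly bounded length to which the $\mathcal A_2(\sigma)$ inequality is applied. The differences are two. First, for the chain itself the paper passes through its Dyadic--Whitney machinery: Lemma~\ref{Geshu} (via Lemma~\ref{komega}) covers a quasihyperbolic geodesic by boundedly many Whitney cubes, whereas you work directly with the uniform curve $\beta_k$ and cover it by balls --- your version is more elementary here. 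Second, and more substantively, the conclusions diverge. The paper argues \emph{pointwise}: if the two limits differ at a single $\xi\notin E_f$, then at each scale $e^{-i}$ some Whitney cube $Q_i$ in the chain carries mass $\int_{\sigma B^{Q_i}}\alpha\gtrsim 1/i$; since cubes at different scales are essentially distinct and the balls $\sigma B^{Q}$ have uniformly bounded overlap (Lemma~\ref{lambda}), summing over $i$ contradicts $\alpha\in L^1(\Omega)$. Thus the paper actually shows uniqueness at \emph{every} boundary point where existence holds, using only $\alpha\in L^1$ and never re-invoking the gauge hypothesis. Your approach instead localizes the oscillation to $B(\omega,C_02^{-k})$ and removes a second $\mathcal H^h$-null set $F$ via the $5r$-covering argument, which requires the gauge hypothesis once more (to force $m\,h(2^{-m})\to0$). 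What your route buys is independence from the Dyadic--Whitney scaffolding in the uniqueness step; what the paper's route buys is a sharper pointwise conclusion and a single use of the integral condition on $h$.
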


The idea for proving both Theorem~\ref{thm:limit exist} and Theorem~\ref{thm:limit is unique} is very similar to that used in \cite{ag17} and thus part of the arguments are similar to the one used there. On the other hand, some of the arguments from \cite{ag17} depends on the Euclidean geometry, and thus do not apply to the general setting of metric spaces. To overcome this technical difficulty, we first develop a suitable Dyadic-Whitney decomposition for a proper domain in a general doubling metric space, which relies crucially on the result of Hyt\"onen and Kairema \cite{hk12}. Then we provide a new argument, which does not depend on the geometry of Whitney cubes in Euclidean spaces, to prove the uniqueness of limits along John curves, namely, Theorem \ref{thm:limit is unique}.

This paper is organized as follows. In Section \ref{sec:pre}, some preliminaries and auxiliary results are presented. In Section \ref{Existence of limits along curves}, we will show that the mapping $f$ can be extended to the boundary of a $\varphi$-length John domain along $\varphi$-length John curves with a small exceptional set with respect to almost sharp Hausdorff gauges, and in Section \ref{Uniqueness of limits along John curves}, we prove our uniqueness theorem.

\section{Preliminaries and auxiliary results}\label{sec:pre}
A curve $\gamma$ in $\Omega$ is a continuous mapping $\gamma:[0,1] \rightarrow \Omega$. A curve $\gamma:[0,1]\rightarrow\Omega$ is said to connect points $x, y \in \Omega$, if $\gamma(0)=y$ and $\gamma(1)=x$; similarly a curve $\gamma:(0,1]\rightarrow\Omega$ is said to connect points $x \in \Omega, y \in \partial \Omega$ if $\gamma(1)=x, \gamma((0,1]) \subset \Omega$ and
$$
\lim _{t \rightarrow 0^{+}} \gamma(t)=y.
$$
We use the notation $\ell(\gamma)$ to denote the Euclidean length of a curve $\gamma$.

Throughout this paper, $C_i(\cdot)$ denotes a constant, where ``$\cdot$" contains all parameters on which the constant depends and $i\geq 1$ is an integer. Moreover, $A\lesssim B$ means $A\leqslant CB$, where $C>0$ is a constant. Furthermore, we say $A\approx B$, if $A\lesssim B\lesssim A$.
\begin{definition}[Ahlfors $q$-regularity]\label{def q-regular}
Let $X_{1}=(X_{1},d_{1},\nu_{1})$ be a metric measure space. An open set $\Omega\subset X_{1}$ is said to be Ahlfors $q$-regular ($q>1$) if there exists a contant $C_{q}\geq 1$ such that for any $x \in \Omega$ and for any $0<r<\diam \Omega$,  $$\frac{1}{C_{q}}r^q\leq \nu_{1}\bigg(B(x,r)\cap\Omega\bigg)\leq C_{q} r^q.$$
\end{definition}

A weaker condition than Ahlfors regularity is the following doubling property.
\begin{definition}[Doubling metric space]
	A metric space $(X,d)$ is said to be a doubling metric space if there exists a positive integer $M\in\mathbb{N}$ such that for every $x\in X$ and for every $r>0$, the
	ball $B(x,r):=\{y\in X, d(y,x)<r\}$ can be covered by at most $M$ balls $B(x_{i},\frac{r}{2})$, $1\leq i\leq M$
\end{definition}

The following theorem, proved by Hyt\"onen and Kairema \cite{hk12}, plays an important role in establishing the Dyadic-Whitney decomposition for proper domains in a doubling metric space.
\begin{proposition}[\cite{hk12}]\label{dyadic decomposition}
Let $X=(X,\rho)$ be a doubling metric space and assume that there are constants $0<c_{0} \leq C_{0}<\infty$ and $\delta \in(0,1)$ such that $12C_{0} \delta \leq c_{0}$. Given a set of points $\left\{z_{\alpha}^{k}\right\}_{\alpha}, \alpha \in \mathscr{A}_{k}$, $k \in \mathbb{Z}$, satisfying that
	$$
	\rho\left(z_{\alpha}^{k}, z_{\beta}^{k}\right) \geq c_{0} \delta^{k} \quad(\alpha \neq \beta), \quad \min _{\alpha} \rho\left(x, z_{\alpha}^{k}\right)<C_{0} \delta^{k} \quad \text {for all x} \in X.
	$$
Then we can construct a family of sets $Q_{\alpha}^{k}$, called half-open dyadic cubes, satisfying that:
\begin{itemize}
\item[(1)] $X=\bigcup_{\alpha} Q_{\alpha}^{k} \quad \text{ (disjoint union) } \quad \text{for all } k \in \mathbb{Z}$;
\item[(2)] if $l \geq k$, then either $Q_{\beta}^{l} \subseteq Q_{\alpha}^{k}$ or $Q_{\alpha}^{k} \cap Q_{\beta}^{l}=\emptyset$;
\item[(3)] $B\left(z_{\alpha}^{k}, c_{1} \delta^{k}\right) \subseteq Q_{\alpha}^{k} \subseteq B\left(z_{\alpha}^{k}, C_{1} \delta^{k}\right)$, where $c_{1}:=3^{-1} c_{0}$ and $C_{1}:=2C_{0}.$
\end{itemize}
The half-open cubes $Q_{\alpha}^{k}$ depend on $z_{\beta}^{l}$ for $l \geq \min \left(k, k_{0}\right)$, where $k_{0} \in \mathbb{Z}$ is a preassigned number.
\end{proposition}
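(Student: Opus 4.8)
The plan is to follow M.\ Christ's dyadic construction, adapted to the doubling metric space $X$ as in Hyt\"onen--Kairema, turning the given nested $\delta^k$-nets $\{z_\alpha^k\}_{\alpha\in\mathscr{A}_k}$ into a genuinely nested family of Borel sets satisfying (1)--(3). First I would reduce to the case in which the point families are nested across scales, $\{z_\alpha^k\}_{\alpha\in\mathscr{A}_k}\subseteq\{z_\beta^{k+1}\}_{\beta\in\mathscr{A}_{k+1}}$, together with an index embedding $\mathscr{A}_k\hookrightarrow\mathscr{A}_{k+1}$ under which each level-$k$ center keeps its label: for each $k$ one replaces the level-$(k+1)$ net by a maximal $c_0\delta^{k+1}$-separated subset of $X$ that contains all level-$k$ points, and maximality restores the $C_0\delta^{k+1}$-covering property. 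This enlargement is undone at the end by merging cubes.

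Next I would build a tree on the centers. For $\beta\in\mathscr{A}_{k+1}$ define a parent $\widehat\beta\in\mathscr{A}_k$: if $z_\beta^{k+1}$ is already a level-$k$ center take its level-$k$ label, otherwise take a label minimizing $\rho(z_\beta^{k+1},z_\alpha^k)$ over $\alpha\in\mathscr{A}_k$, ties broken by a fixed well-ordering of $\bigsqcup_k\mathscr{A}_k$; the covering property gives $\rho(z_\beta^{k+1},z_{\widehat\beta}^k)<C_0\delta^k$, and iterating yields ancestor maps $\beta\mapsto\beta^{(k)}$ from $\mathscr{A}_l$ to $\mathscr{A}_k$ for $l\ge k$ with $\rho(z_\beta^l,z_{\beta^{(k)}}^k)<\tfrac{C_0}{1-\delta}\delta^k$. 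Then I would form pre-cubes: for $x\in X$ and $k\in\mathbb{Z}$, let $c_k(x)$ be the label of the nearest level-$k$ center to $x$, broken first by ``inheritance'' (if the level-$k$ ancestor of $c_{k+1}(x)$ is among the nearest level-$k$ centers, use it) and otherwise by the fixed well-ordering, and set $\widetilde Q_\alpha^k=\{x:c_k(x)=\alpha\}$. The net inequalities give two-sided ball bounds $B(z_\alpha^k,c_1\delta^k)\subseteq\widetilde Q_\alpha^k\subseteq B(z_\alpha^k,C_0\delta^k)$.

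The genuine cubes are then obtained by forcing nestedness: declare $x\in Q_\alpha^k$ if the level-$k$ ancestor $c_l(x)^{(k)}$ of $x$'s pre-cube label at scale $l$ equals $\alpha$ for all sufficiently large $l$. The crux is that $c_l(x)^{(k)}$ stabilizes as $l\to\infty$: once $x$ lies deep inside a pre-cube at a fine enough scale, the inheritance rule prevents its ancestor chain from changing again, and the quantitative gap $12C_0\delta\le c_0$ between separation and scale is exactly what keeps the competing ``nearest center'' and ``inherited ancestor'' conditions from ever conflicting. Granting stabilization, the $Q_\alpha^k$ partition $X$ (property (1)); nestedness (property (2)) holds because the defining condition at scale $l\ge k$ refines that at scale $k$ through the ancestor maps; and property (3) follows since a point of $B(z_\alpha^k,c_1\delta^k)$ is never reassigned, while every descendant pre-cube $\widetilde Q_\beta^l$ with $\beta^{(k)}=\alpha$ sits in $B(z_\beta^l,C_0\delta^l)\subseteq B(z_\alpha^k,C_1\delta^k)$ by the ancestor distance bound and $\delta<1$.

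It remains to collapse the enlarged index sets back to the original $\mathscr{A}_k$, merging each cube indexed by an added point into the cube of the first genuine ancestor in its chain (this preserves (1)--(3)), and to observe that since $Q_\alpha^k$ only used $c_l(x)$ and ancestor maps for $l\ge k$, freezing all scales $\le k_0$ to the level-$k_0$ data gives the asserted dependence on $z_\beta^l$ for $l\ge\min(k,k_0)$. The main obstacle is the stabilization claim used to define $Q_\alpha^k$: proving that $x$'s level-$k$ ancestor along its pre-cube chain eventually becomes constant is what makes $Q_\alpha^k$ well-defined and truly nested, and it is precisely where the hypothesis $12C_0\delta\le c_0$ must be used with care; the rest is triangle-inequality bookkeeping with the constants $c_0$, $C_0$, $\delta$.
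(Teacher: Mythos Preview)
The paper does not prove this proposition at all: it is quoted verbatim (with citation) from Hyt\"onen--Kairema \cite{hk12} and used as a black box in the construction of the Dyadic--Whitney decomposition. Your sketch is essentially the Christ/Hyt\"onen--Kairema construction from that reference, so there is nothing in the paper to compare against beyond the citation itself.
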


Let $X$ be a doubling metric space. Using the dyadic cubes from Proposition~\ref{dyadic decomposition}, we can select a suitable subfamily of cubes to form a Whitney decomposition of a proper domain $\Omega\subset X$. Recall that the Whitney decomposition states that any proper domain $\Omega\subset \rr^n$, i.e., $\R^n\backslash \Omega\neq \emptyset$,
can be written as the union of cubes such that the diameter of each cube is comparable to its distance to the boundary. As a substitute for the Whitney decomposition in the Euclidian setting, we introduce the Dyadic-Whitney decomposition in a doubling metric spaces.
\begin{definition}\label{dyadic-whitney decomposition}(Dyadic-Whitney decomposition)
Let $X_{1}=(X_{1}, d_{1})$ be a doubling metric space. For a proper domain $\Omega \subseteq X_{1}$, a Dyadic-Whitney decomposition of $\Omega$ with data $(\delta,c_1,C_1,a)$, where $\delta\in (0, 1)$, $0<c_1<C_1$, and $a \geq 4$, is a collection $\mathscr{W}_{\Omega}$ of subsets of $X_{1}$ satisfying:
	\begin{itemize}
		\item[(1)] $\bigcup\limits_{Q\in\mathscr{W}_{\Omega}}Q$ = $\Omega$;
		\item[(2)] 	$Q\bigcap Q'=\emptyset$ for all $Q,Q'\in \mathscr{W}_{\Omega}$, where $Q\neq Q'$;
		\item[(3)] For any $Q\in \mathscr{W}_{\Omega}$, there exists $x=x_Q\in \Omega$ and $k\in \mathbb{Z}$ such that
		$$B(x,c_1\delta ^k)\subset Q\subset B(x,C_1\delta ^k)$$
		and
		$$(a-2)C_1\delta ^k\leq d_{1}(Q,X\backslash\Omega)\leq aC_1\delta ^{k-1}.$$
	\end{itemize}
\end{definition}	

The sets $Q\in \mathscr{W}_{\Omega}$ are referred to as Dyadic-Whitney cubes. As each Dyadic-Whitney cube $Q$ can be assigned with  a number $k\in\mathbb{Z}$ satisfying the above properties, we can express $\mathscr{W}_{\Omega}$ as the union of the disjoint subsets ${\mathscr{W}_{\Omega}^k}$, $k\in \mathbb Z$, where $\mathscr{W}_{\Omega}^k$ is a collection of dyadic cubes with the above properties for fixed $k$ and that the union of these dyadic cubes is a connected set. Moreover, set $B^Q=B(x,C_1\delta^k)$ and $B_Q=B(x,c_1\delta^k)$.

We first establish the folowing existence result.

\begin{lemma}\label{existence of decomposition}
	Let $X=(X,\rho)$ be a doubling metric space and $\Omega\subset X$ a proper domain. For all $\delta, c_{1}, C_{1}, a>0$ satisfying $a\geq 4$, $6c_1\leq C_1$,$2C_1\delta\leq c_1$ and $\delta\in(0,1)$, there exists a Dyadic-Whitney decomposition of $\Omega$ with data $\left(\delta, c_{0}, C_{1}, a\right)$.
\end{lemma}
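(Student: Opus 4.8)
The plan is to obtain $\mathscr W_\Omega$ by keeping, from the Hyt\"onen--Kairema dyadic system on $X$, exactly those cubes that are maximal among all dyadic cubes lying well inside $\Omega$ at their own scale. First I would set up the dyadic cubes: take $c_0:=3c_1$ and $C_0:=C_1/2$, so that the hypothesis $2C_1\delta\le c_1$ becomes precisely $12C_0\delta\le c_0$ and the hypothesis $6c_1\le C_1$ becomes $c_0\le C_0$; then for each $k\in\mathbb Z$ I would fix a maximal $c_0\delta^k$-separated set $\{z^k_\alpha\}_{\alpha\in\mathscr A_k}\subseteq X$, whose maximality forces $\min_\alpha\rho(x,z^k_\alpha)<c_0\delta^k\le C_0\delta^k$ for every $x\in X$. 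Proposition~\ref{dyadic decomposition} then produces dyadic cubes $Q^k_\alpha$ with the partition property at each scale, the nesting property across scales, and $B(z^k_\alpha,c_1\delta^k)\subseteq Q^k_\alpha\subseteq B(z^k_\alpha,C_1\delta^k)$. Writing $Q^k(x)$ for the unique generation-$k$ cube through $x$, I note that $\diam Q^k(x)\le 2C_1\delta^k$, hence $Q^k(x)\subseteq B(x,2C_1\delta^k)$.

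Next I would introduce the admissible family: call a generation-$k$ cube $Q$ admissible, $Q\in\mathscr C$, if $Q\subseteq\Omega$ and $(a-2)C_1\delta^k\le\rho(Q,X\setminus\Omega)\le aC_1\delta^{k-1}$, so that every member of $\mathscr C$ already satisfies condition (3) of Definition~\ref{dyadic-whitney decomposition}. The crucial point will be that $\mathscr C$ covers $\Omega$. Given $x\in\Omega$, since $\rho(Q^k(x),X\setminus\Omega)\le\rho(x,X\setminus\Omega)$ for every $k$, while $\rho(Q^k(x),X\setminus\Omega)\to\rho(x,X\setminus\Omega)>0$ and $aC_1\delta^{k-1}\to 0$ as $k\to+\infty$ and $aC_1\delta^{k-1}\to\infty$ as $k\to-\infty$, the set of integers $k$ with $\rho(Q^k(x),X\setminus\Omega)\le aC_1\delta^{k-1}$ is nonempty and bounded above; let $k(x)$ be its largest element. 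Maximality of $k(x)$ gives $\rho(Q^{k(x)+1}(x),X\setminus\Omega)>aC_1\delta^{k(x)}$, whence $\rho(x,X\setminus\Omega)>aC_1\delta^{k(x)}$ since $x\in Q^{k(x)+1}(x)$, and therefore for every $w\in Q^{k(x)}(x)$
\[ \rho(w,X\setminus\Omega)\ \ge\ \rho(x,X\setminus\Omega)-\diam Q^{k(x)}(x)\ >\ (a-2)C_1\delta^{k(x)}\ \ge\ 2C_1\delta^{k(x)}\ >\ 0, \]
using $a\ge 4$. This simultaneously yields $Q^{k(x)}(x)\subseteq\Omega$ and $\rho(Q^{k(x)}(x),X\setminus\Omega)\ge(a-2)C_1\delta^{k(x)}$, while the matching upper bound holds by the choice of $k(x)$; hence $Q^{k(x)}(x)\in\mathscr C$ and contains $x$.

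Finally I would let $\mathscr W_\Omega$ be the set of maximal elements of $(\mathscr C,\subseteq)$. The admissible cubes containing a fixed point $x$ form a chain, since two dyadic cubes through a common point are nested, and it is finite because $Q\in\mathscr C$ with $x\in Q$ and $k=\mathrm{gen}(Q)$ forces both $(a-2)C_1\delta^k\le\rho(Q,X\setminus\Omega)\le\rho(x,X\setminus\Omega)$ and $\rho(Q,X\setminus\Omega)\le aC_1\delta^{k-1}$, which confines $k$ to a finite range; so the chain has a largest cube, which is automatically maximal in $\mathscr C$ (any admissible cube properly containing it would again contain $x$ and lie in the same chain) and contains $x$. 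Combined with the covering claim this gives property (1). Property (2) is immediate: if two distinct maximal cubes met, dyadic nesting would place one strictly inside the other, contradicting maximality. Property (3) holds since $\mathscr W_\Omega\subseteq\mathscr C$, with center $x_Q:=z^k_\alpha\in Q\subseteq\Omega$ and the ball inclusions coming from Proposition~\ref{dyadic decomposition}. I expect the real obstacle to be organizational rather than computational: the naive rule $x\mapsto Q^{k(x)}(x)$ gives cubes of the correct size but not a pairwise disjoint family, so one is forced to enlarge to $\mathscr C$ carrying \emph{both} distance bounds and then pass to maximal cubes; the only genuinely delicate input is the slack $a-2\ge 2$ coming from $a\ge 4$, which is what makes the covering step above succeed.
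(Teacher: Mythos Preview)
Your proof is correct and follows essentially the same strategy as the paper: build the Hyt\"onen--Kairema dyadic system with $c_0=3c_1$, $C_0=C_1/2$, single out a preliminary family of cubes whose distance to $X\setminus\Omega$ lies between $(a-2)C_1\delta^k$ and $aC_1\delta^{k-1}$, and then pass to maximal cubes to enforce disjointness. The only cosmetic difference is that the paper defines its preliminary family $\mathscr W_\Omega^0$ via the layer condition $Q^k_\alpha\cap\Omega_k\neq\emptyset$ (with $\Omega_k=\{x:aC_1\delta^k<d(x,X\setminus\Omega)\le aC_1\delta^{k-1}\}$) and then \emph{derives} the distance bounds, whereas you impose those bounds directly in your admissible class~$\mathscr C$; the maximal-cube extraction and the use of $a\ge 4$ to get the $(a-2)C_1\delta^k$ lower bound are identical.
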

\begin{proof}
Assume that $C_0=C_1/2$ and $c_0=3c_1$. Observe that $C_0, c_0, C_1, c_1$ and $\delta$ satisfy the requirement of Proposition~\ref{dyadic decomposition}. It is not hard to see that there exists a set of points $\left\{z_{\alpha}^{k}\right\}_{\alpha}, \alpha \in \mathscr{A}_{k}$, for every $k \in \mathbb{Z}$, with the properties that
	$\rho\left(z_{\alpha}^{k}, z_{\beta}^{k}\right) \geq c_{0} \delta^{k}$ when $\alpha \neq \beta$, $\min _{\alpha} \rho\left(x, z_{\alpha}^{k}\right)<C_{0} \delta^{k} \text { for all } x\in X$. Thus, we can construct a family of sets $Q_{\alpha}^{k} \subset \Omega$ satisfying the properties in Proposition \ref{dyadic decomposition}. Then, we divide $\Omega$ as
$$\Omega_{k}=\left\{x\in\Omega: a C_{1} \delta^{k}<d(x, X \backslash \Omega) \leq a C_{1} \delta^{k-1}\right\},
$$ such that $\Omega=\bigcup\limits_{k=-\infty}^{\infty} \Omega_{k}$.
Set
$$\mathscr{W}_{\Omega}^{0}=\bigcup_{k}\left\{Q_{\alpha}^{k}: \alpha \in \mathscr{A}_{k}, Q_{\alpha}^{k} \cap \Omega_{k} \neq \emptyset\right\}.
$$
Now fix $Q^k_{\alpha}\in\mathscr{W}_{\Omega}^{0}$. Then, $d\left(Q_{\alpha}^{k}, X \backslash \Omega\right) \leq a C_{1} \delta^{k-1}$, and it follows from the triangle inequality that \begin{equation}\label{120}
d\left(Q_{\alpha}^{k}, X \backslash \Omega\right) \geq a C_{1} \delta^{k}-\diam Q_{\alpha}^{k} \geq(a-2) C_{1} \delta^{k}.
\end{equation}
Also, note that
\begin{equation}\label{110}
	\diam Q_{\alpha}^{k}\approx \delta^{k}\approx d(Q_{\alpha}^{k},X\backslash\Omega),
\end{equation}
where the constants are independent of $\alpha$ and $k$. Then, by the above inequality, we easily deduce that these cubes are disjoint from $X\backslash \Omega$ and clearly cover $\Omega$, that is, \[\bigcup_{Q\in\mathscr{W}_{\Omega}^{0}}Q=\Omega.\]	
Notice that $\mathscr{W}_{\Omega}^{0}$ satisfies all the requirement except that these cubes are not disjoint.

Next, we refine the choice of cubes. Fix any cube $Q\in\mathscr{W}_{\Omega}^{0}$ and consider all the cubes  $Q^{\prime}$ which contain $Q$. Then, by \eqref{110}, we have \[\diam Q^{\prime}\approx d(x_0,X\backslash\Omega)\approx\diam Q,\] where $x_0$ is a point in $Q$. Thus, the number of such cubes $Q^{\prime}$ is finite. Since by Proposition~\ref{dyadic decomposition} any two cubes in $\mathscr{W}_{\Omega}^{0}$ are either disjoint or one contains the other, we know that there exists a unique maximal cube $Q^{\prime}$ containing $Q$. Denote the set of all the maximum cubes $Q^{\prime}$ as $\mathscr{W}_{\Omega}$. Then, obviously $\mathscr{W}_{\Omega}$ satisfies all the properties of the dyadic decomposition.
\end{proof}

\begin{remark}\label{rmk:on Whitney decomposition}
According to Lemma \ref{existence of decomposition}, if $\delta, c_0,C_1, a$ satisfy $0<\delta<1$, $a\geq 4$, $6c_1\leq C_1$, $2C_1\delta\leq c_1$, then the dyadic-Whitney decomposition of $\Omega$ exists. Moreover, \eqref{120} implies that
\[d(B^Q,X\backslash \Omega)=d(B(x,C_1\delta^k),X\backslash \Omega)\geq C_1\delta^k.\]
In particular, we have
\begin{equation}\label{150}
\frac{3}{2}B^Q\subset\subset\Omega.
\end{equation}
This shows that the Dyadic-Whitney decomposition is different from the classical Whitney decomposition, even for domains in the Euclidean space $\R^n$.
\end{remark}

For the next lemma, let $\lambda B^{Q}$ be the ball which has the same center as $B^{Q}$ but the radius is expanded by the factor $\lambda$.

\begin{lemma}\label{lambda}
	 Under the condition of Lemma $\ref{existence of decomposition},$ there exists a constant $\lambda_{0}>1$ such that for any $x\in\Omega$ and $1\leq \lambda\leq\lambda_{0}$, there are at most $N=N\left(\delta, c_{1}, C_{1}, a\right)$ balls $\lambda B^{Q}$, $Q\in\mathscr{W}_{\Omega}$, containing $x$.
\end{lemma}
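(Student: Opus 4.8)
The plan is to prove this bounded-overlap property by the two-step strategy familiar from the Euclidean Whitney decomposition. First I would show that all Dyadic--Whitney cubes whose $\lambda$-dilated balls contain a fixed point $x$ have comparable size, i.e. the associated integers $k(Q)$ lie in a bounded range. Then, since the inner balls $B_Q$ are pairwise disjoint, the doubling property of $X$ permits only boundedly many such cubes.

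For the first step I would fix a constant $\lambda_0$ with $1<\lambda_0<a-2$; this is possible precisely because $a\ge 4$ forces $a-2\ge 2$, and $\lambda_0=3/2$ is a convenient choice. Let $1\le\lambda\le\lambda_0$ and suppose $x\in\lambda B^{Q}\cap\lambda B^{Q'}$, where $k=k(Q)$ and $k'=k(Q')$ are the integers attached to $Q,Q'$ in Definition~\ref{dyadic-whitney decomposition}. Since $x_Q\in Q$, $\diam Q\le 2C_1\delta^{k}$, and $d(x_Q,X\setminus\Omega)\le d(Q,X\setminus\Omega)+\diam Q$, property (3) of that definition yields
$$(a-2)C_1\delta^{k}\le d(x_Q,X\setminus\Omega)\le (a+2)C_1\delta^{k-1}.$$
Because $d(\cdot,X\setminus\Omega)$ is $1$-Lipschitz and $d(x,x_Q)<\lambda C_1\delta^{k}$, this gives
$$(a-2-\lambda)C_1\delta^{k}<d(x,X\setminus\Omega)<(a+2+\lambda)C_1\delta^{k-1},$$
and the same bounds hold with $k'$ in place of $k$. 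Here $a-2-\lambda>0$ by the choice of $\lambda_0$, and this is the only place the hypothesis $\lambda\le\lambda_0$ enters; note that the lemma genuinely fails for large $\lambda$, since the dilated ball of a large cube far from $\partial\Omega$ would then contain arbitrarily small cubes. Comparing the lower bound for $Q$ with the upper bound for $Q'$, and conversely, forces $\delta^{-|k-k'|}\le\Lambda$ for some $\Lambda=\Lambda(\delta,a)\ge 1$, hence $|k-k'|\le K_0$ for an integer $K_0=K_0(\delta,a)$.

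For the second step I may assume (otherwise nothing is to prove) that some $Q_0\in\mathscr{W}_{\Omega}$ has $x\in\lambda B^{Q_0}$, with attached integer $k_0$. By the first step every $Q\in\mathscr{W}_{\Omega}$ with $x\in\lambda B^{Q}$ satisfies $k(Q)\in[k_0-K_0,k_0+K_0]$, so $B_Q=B(x_Q,c_1\delta^{k(Q)})$ has radius at least $\rho_0:=c_1\delta^{k_0+K_0}$, while $d(x,x_Q)<\lambda C_1\delta^{k(Q)}\le\lambda C_1\delta^{k_0-K_0}=:R$. The cubes of $\mathscr{W}_{\Omega}$ are pairwise disjoint and $B_Q\subset Q$, so the balls $B_Q$ are pairwise disjoint; hence if two such cubes had $d(x_Q,x_{Q'})<\rho_0$ then $x_{Q'}\in B_Q\cap B_{Q'}$, which is impossible. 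Thus the centers $\{x_Q\}$ form a $\rho_0$-separated subset of $B(x,R)$. Covering $B(x,R)$ by at most $M^{m}$ balls of radius $R/2^{m}\le\rho_0/2$, where $M$ is the doubling constant of $X$ and $m=\lceil\log_2(2R/\rho_0)\rceil$, and observing that a ball of radius $\le\rho_0/2$ contains at most one point of a $\rho_0$-separated set, the number of admissible cubes is at most $M^{m}$. Since $2R/\rho_0=2\lambda C_1\delta^{-2K_0}/c_1\le 2\lambda_0 C_1\delta^{-2K_0}/c_1$ does not depend on $x$ or $k_0$, this gives the desired bound $N=N(\delta,c_1,C_1,a)$ (with the fixed doubling constant of $X$ counted among the data).

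The main obstacle is the first step: executing the distance estimates with explicit control on all constants, and in particular realizing that $\lambda_0<a-2$ is the natural and essentially unavoidable restriction on the dilation factor — something invisible in the Euclidean Whitney decomposition until the constants are tracked. Once the integers $k(Q)$ are pinned down, the counting is the routine separated-set argument that replaces the Euclidean volume comparison, using only pairwise disjointness of the cubes and the doubling property of $X$.
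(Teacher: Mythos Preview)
Your proof is correct and follows essentially the same two-step strategy as the paper: first pin down the scale of all admissible cubes via the distance $d(x,X\setminus\Omega)$, then count using disjointness of the inner balls $B_Q$ together with the doubling property. The only cosmetic difference is that the paper takes $d(x,X\setminus\Omega)$ itself as the reference scale (showing directly $\diam B_Q\approx d(x,X\setminus\Omega)$ and $B_Q\subset B(x,4\,d(x,X\setminus\Omega))$), whereas you fix a reference cube $Q_0$ and bound $|k(Q)-k_0|$; your separated-set argument also makes explicit the counting step that the paper dispatches in one sentence.
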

\begin{proof}
Set $\lambda_0=\frac{3}{2}$. Fix $\lambda$ satisfying $1\leq \lambda\leq \lambda_0$ and $x\in\Omega$. Consider a dyadic cube $Q$ such that $\lambda B^Q$ contains $x$. Set the center of $B^Q$ to be $x_Q$. Then, \begin{equation}\label{130}
d(x,X\backslash \Omega)\geq d(\lambda B^Q,X\backslash\Omega)\geq d(x_Q,X\backslash\Omega)-\lambda C_1\delta^k\geq 2C_1\delta^k-\frac{3}{2}C_1\delta^k=\frac{1}{2}C_1\delta^k,\end{equation}
where $k$ is the index such that the dyadic cube $Q$ satisfies (3) of Proposition~\ref{dyadic decomposition}.
Thus, we find that
\[\diam(B^Q)\leq 2 C_1\delta^k\leq 4d(x,X\backslash \Omega).\]
Therefore,
\[B^Q\subset B(x,4d(x,X\backslash \Omega)).\]
Moreover,
\[d(x,X\backslash \Omega)\leq d(Q,X\backslash \Omega)+\diam(\lambda B^Q)\leq aC_1\delta^{k-1}+2\lambda C_1\delta^k=:C\delta^k.\]
Then, by the above inequality and \eqref{130}, we infer that
\[
\diam(B_Q)\approx d(x,X\backslash \Omega)
\]
and that
\[
B_Q\subset B(x,4d(x,X\backslash \Omega))
\] for all such $Q$. Thus, by the doubling property, there are at most $N=N\left(\delta, c_{1}, C_{1}, a\right)$ dyadic cubes $Q$ such that $\lambda B^{Q}$ contains $x$.

\end{proof}

In the next example, we point out that when the spaces are sufficiently nice, the class $\mathcal{A}_1$ contains quasiregular maps with integrable Jacobian determinant and $\mathcal{A}_2$ contains mappings of exponentially integrable distortion with integrable Jacobian determinant.

\begin{example}\label{exam:two classes}
Suppose $X_1=(X_1,d_1,\nu_1)$ is a proper metric measure space and $\Omega\subset X_1$ is a bounded Ahlfors $q$-regular domain supporting a $(1,p)$-Poincar\'e inequality with $p>q-1$ in the sense of Heinonen and Koskela \cite{hk98}\footnote{We recommend the readers to the monograph \cite{hkst15} for more information}. Assume $X_2=(X_2,d,\nu_2)$ is a complete metric measure space, satisfying condition (P): for any two points $y_1, y_2\in X_2$ and any ball $B(y_2,\delta)\subset X_2$, there exists a point $y_2^{\prime}\in B(y_2,\delta)$ such that \[d(y_2^{\prime},y_1)> d(y_2,y_1).\]
Let $f\colon \Omega\to X_2$ be a non-constant continuous $N^{1,p}(\Omega,X_2)$-map. Then there exists a constant $\sigma\geq 1$ such that
\begin{itemize}
	\item $f\in \mathcal{A}_1(\sigma)$ whenever it is $K$-quasiregular with integrable Jacobian determinant;
	\item $f\in \mathcal{A}_2(\sigma)$ whenever it is of $\lambda$-exponentially integrable distortion\footnote{See \cite{g15b} for precise definition} and with integrable Jacobian determinant.
\end{itemize}
We briefly indicate the proof of the first assertion, since the proof of the second one is similar (both are indeed similar to \cite[Proof of Theorem 1.1]{kn08}). Note that the only difference with the previous theorem is the usage of the condition (P) for open and continuous mappings.

Fix a ball $B=B(x,r)$ with $5\sigma B\subset \subset\Omega$, where $\sigma$ is the expanding factor of the balls from the Poincar\'e inequality of the space $X_1$. By the abstract Sobolev embedding on spheres from \cite[Theorem 7.1]{hak00}, we know
\begin{equation}\label{eq:Sob embedding sphere}
	\diam f(\partial B(x,r))\leq r^{\frac{p-q}{p}}\Big(\int_{5\sigma B}g_f(y)^pd\nu(y) \Big)^{\frac{1}{p}}
\end{equation}
for some $r\in (r_0/2,r_0)$, where $g_f$ is the minimal $p$-weak upper gradient of $f$; see \cite{hkst15}.
Since $f$ is $K$-quasiregular, $g_f^q\leq KJ_f$ and so by H\"older's inequality, we know
$$\Big(\int_{5\sigma B}g_f(y)^pd\nu(y) \Big)^{\frac{1}{p}}\leq K^{\frac{1}{q}}r^{\frac{q-p}{p}}\Big(\int_{5\sigma  B}J_f(x)d\nu_1(x) \Big)^{1/q}.$$
Next, we claim that
$$\diam f(B)\leq \diam f(\partial B)$$
for every ball $B$ with $B\subset \subset\Omega$. We prove by contradiction a little stronger statement: \[\diam(f(\overline{B}))\leq\diam(f(\partial B)).\] Assume the statement is false. Then, \begin{equation}\label{140}
	\diam(f(\overline{B}))>\diam(f(\partial B)).
\end{equation}
Since $\overline{B}$ is compact and $f$ is continuous, there are two points $x$ and $y$ in the closed ball such that $d(f(x),f(y))=\diam (f(\overline{B}))$. Note that $x,y$ can not belong to $\partial B$ simultaneously, since if it happens, then \eqref{140} is false. Without loss of generality, we may assume that $x\in B$. Then, as $f(B)$ is open, there is a ball $B^{\prime}$ in $X_2$ such that $f(x)\in B^{\prime}\subset f(B)$. Then, Using the condition (P), we find a point $f(z)\in B^{\prime}$ for some $z\in B$ such that
$$d(f(z),f(y))>d(f(x),f(y))=\diam(f(\overline{B})),$$
which contradicts with \eqref{140}. Then, using the above claim for each $B=B(x,r)$ with $5\sigma B\subset\subset \Omega$,
$$\diam f(B)\leq \diam f(\partial B)\leq C\Big(\int_{5\sigma B}J_f(x)d\nu_1(x)\Big)^{1/q}.$$
\end{example}

\section{Proof of the main results}
\subsection{Existence of limits along curves}\label{Existence of limits along curves}
There might be points on the boundary $\partial\Omega$ that are not accessible by a rectifiable curve inside $\Omega$ if we just think about general bounded domains; see for instance \cite[Figure 1]{ag17}.
To avoid these situations, the authors considered in~\cite{ag17} the class of John domains. Here, we consider the more general class of $\varphi$-length John domains.
\begin{lemma}\label{connect to boundary}
	Let $\Omega\subset X_{1}$ be a $\varphi$-length John domain with center $x_{0}$. Then each boundary point can be connected to $x_{0}$ by a $\varphi$-length John curve.
\end{lemma}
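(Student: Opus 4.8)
The plan is to fix a boundary point $\xi\in\partial\Omega$, take a sequence of interior points $x_j\in\Omega$ converging to $\xi$, invoke the $\varphi$-length John condition to get a $\varphi$-length John curve $\gamma_j$ from $x_j$ to the center $x_0$, and then extract a limiting curve that connects $\xi$ to $x_0$. Since $\gamma_j(1)=x_0$ for all $j$ and each $\gamma_j$ satisfies $\varphi(c\,d_1(\gamma_j(t),\partial\Omega))\geq \ell(\gamma_j([0,t]))$, the curves are uniformly rectifiable on every compact subinterval of $(0,1]$ (away from the endpoint near $\xi$), so we can reparametrize them by arc length and apply an Arzel\`a--Ascoli argument. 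The key structural point is that all the $\gamma_j$ live in the bounded set $\Omega$, and on the portion of the curve staying a definite distance from the boundary the length is uniformly controlled, which gives the equicontinuity needed for a subsequential limit.

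First I would reparametrize: for each $j$ let $L_j=\ell(\gamma_j)$ and rescale so that $\gamma_j\colon[0,L_j]\to\overline\Omega$ is $1$-Lipschitz (parametrized by arc length), with $\gamma_j(0)=x_j$ and $\gamma_j(L_j)=x_0$. The John inequality at $t$ close to the $x_j$-end shows $\ell(\gamma_j([0,t]))\le \varphi(c\,d_1(\gamma_j(t),\partial\Omega))$; near the $x_0$-end the lengths $L_j$ are bounded because $d_1(x_0,\partial\Omega)$ is a fixed positive number and $\varphi$ is increasing, so $L_j\le \varphi(c\,\operatorname{diam}\Omega)$ say — more carefully, the total length from $x_0$ backward is controlled since $d_1(\gamma_j(t),\partial\Omega)\le\operatorname{diam}\Omega$. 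Hence after passing to a subsequence we may assume $L_j\to L\in(0,\infty]$; if $L<\infty$ we extend each $\gamma_j$ constantly to $[0,L]$, and by Arzel\`a--Ascoli (using completeness of $X_1$ and boundedness of $\Omega$) a further subsequence converges uniformly to a $1$-Lipschitz curve $\gamma\colon[0,L]\to\overline\Omega$ with $\gamma(L)=x_0$. If $L=\infty$ one works on $[0,\infty)$ with uniform convergence on compacta and obtains $\gamma\colon[0,\infty)\to\overline\Omega$; then reparametrize $[0,\infty)$ or $[0,L]$ back to $(0,1]$ by an increasing homeomorphism fixing the $x_0$-endpoint.

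Next I would check that the limit curve does the job. For $t$ bounded away from the $\xi$-endpoint, $\gamma_j(t)\in\Omega$ with $d_1(\gamma_j(t),\partial\Omega)$ bounded below (by the John inequality, since $\ell(\gamma_j([0,t]))$ is then bounded below and $\varphi$ is increasing with $\varphi^{-1}$ well-defined), so the limit point $\gamma(t)$ stays in $\Omega$, i.e. $\gamma((0,1])\subset\Omega$. Passing to the limit in $\varphi(c\,d_1(\gamma_j(t),\partial\Omega))\geq \ell(\gamma_j([0,t]))$, using continuity of $\varphi$, continuity of $x\mapsto d_1(x,\partial\Omega)$, and lower semicontinuity of length under uniform convergence, gives $\varphi(c\,d_1(\gamma(t),\partial\Omega))\geq \ell(\gamma([0,t]))$ for all $t$; in particular $\ell(\gamma([0,t]))\to 0$ as $t\to 0^+$ forces $d_1(\gamma(t),\partial\Omega)\to 0$. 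Combined with $d_1(\gamma(t),x_j)$ being small — which follows since the initial arc $\gamma_j([0,t])$ has small length, so $d_1(\gamma_j(t),x_j)\le\ell(\gamma_j([0,t]))\le\varphi(c\,d_1(\gamma_j(t),\partial\Omega))$ is small, and passing $j\to\infty$ with $x_j\to\xi$ — we get $\gamma(t)\to\xi$ as $t\to 0^+$. Thus $\gamma\in I^{(\varphi,c)}(\xi,x_0)$.

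I expect the main obstacle to be the compactness/limiting step done cleanly: one must ensure the initial segments near $\xi$ do not "escape" (handled by the smallness of $d_1(\gamma_j(t),x_j)$ above, which needs the John inequality applied at small $t$), and one must control the total lengths $L_j$ uniformly so that Arzel\`a--Ascoli applies on a common interval — this uses only that $\Omega$ is bounded and $\varphi$ is increasing, but it should be stated carefully. A secondary technical point is verifying that the limit curve $\gamma$, after reparametrization onto $(0,1]$, is genuinely continuous at every point including where pieces of different $\gamma_j$ were glued; this is automatic from uniform-on-compacta convergence of $1$-Lipschitz maps.
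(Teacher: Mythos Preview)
Your approach is essentially identical to the paper's: take a sequence of interior points converging to the boundary point, reparametrize the associated $\varphi$-John curves by arc length (using the uniform bound $\ell(\gamma_j)\le\varphi(c\,\diam\Omega)$), apply Arzel\`a--Ascoli, and pass the John inequality to the limit via the lower semicontinuity of length and the continuity of $\varphi$ and $d_1(\cdot,\partial\Omega)$. Your discussion of the case $L=\infty$ is superfluous since the uniform length bound you already noted rules it out, and under the paper's standing assumptions $\overline\Omega$ is compact, so the Arzel\`a--Ascoli step goes through; otherwise the argument is correct.
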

\begin{proof}
	For any $\omega \in \partial \Omega$, we can find a sequence $\omega_{i} \in \Omega$ such that $\lim\limits_{i \rightarrow+\infty} \omega_{i}=\omega .$ Since $\Omega$ is a $\varphi$-length John domain, every $\omega_{i} \in \Omega$ can be connected to $x_{0}$ by some curve $\gamma_{i}:[0,1] \rightarrow \Omega$ such that for all $t \in[0,1]$,
	$$
	\ell\left(\left.\gamma_{i}\right|_{[0, t]}\right) \leq \varphi\left(c d_{1}\left(\gamma_{i}(t), \partial \Omega\right)\right).
	$$
	Thus
	$$
	L:=\operatorname{sup}_{i}l_{i}=\operatorname{sup}_i\ell\left(\gamma_{i}\right) \leq \varphi(\operatorname{cdiam} \Omega)<\infty.
	$$
	If we reparameterize $\gamma_{i}$ by arc-length, then $\gamma_{i}:\left[0, l_{i}\right] \rightarrow \Omega$ is 1-Lipschitz. According to the Arzela-Ascoli theorem, there exists $\gamma:[0, L] \rightarrow \overline{\Omega}$ such that $\gamma_{i}$ converge to $\gamma$ uniformly. Since the length functional is lower semicontinuous (with respect to uniform convergence) and $\varphi, d_{1}$ are continuous, we have for all $t\in [0,L],$
	$$
	\ell\left(\left.\gamma\right|_{[0, t]}\right) \leq \varphi\left(c d_{1}(\gamma(t), \partial \Omega)\right).
	$$
	Therefore, $\gamma$ is a desired $\varphi$-length John curve connecting $\omega$ to $x_{0}$.
\end{proof}
We define $P^{(\varphi,c)}(\xi)=\{Q\in \mathscr{W}_{\Omega}:Q\cap\gamma\neq\emptyset  \text{ for some } \gamma\in I^{(\varphi,c)}(\xi,x_{0})\}.$ For $Q\in \mathscr{W}_{\Omega}$ and $ E\subset\partial\Omega$, the shadow of $Q$ on $E$ is defined by $$S^{(\varphi,c)}_{E}(Q)=\{\xi\in E:Q\in P^{(\varphi,c)}(\xi)\}.$$ If $E=\partial\Omega$, we write $S^{(\varphi,c)}(Q)$ instead of $S^{(\varphi,c)}_{\partial\Omega}(Q)$.

Notice that for a point $y\in\partial\Omega$ there might be infinitely many $\varphi$-length John curves connecting $x_{0}$ and $y$. For the rest of this section, the standing assumptions are: $\Omega$ is a $\varphi$-length John domain with center $x_{0}$ and $\mathscr{W}_{\Omega}$ is a dyadic-Whitney decomposition of $\Omega$.

We need the following two basic estimates for the shadow $S^{(\varphi,c)}$. In the setting of Euclidean spaces, these results were proved in \cite{g15}.

\begin{lemma}\label{geshu}
	Let $Q\in \mathscr{W}_{\Omega}$. Then $S^{(\varphi,c)}(Q)$ is closed and there exists a constant $C=C(c,c_{0},C_{1},\delta,a)>0$ such that
	$$\diam S^{(\varphi,c)}(Q)\leq 3\varphi(C\diam Q).$$
	Furthermore, for any $k\in \mathbb{Z}$,
	$$\#\{Q\in \mathscr{W}_{\Omega}^k:\xi \in S^{(\varphi,c)}(Q)\}\leq C_{q}^{2}\Big(\frac{\varphi(2CC_1\delta^k)}{c_1\delta^k}\Big)^{q}.$$
\end{lemma}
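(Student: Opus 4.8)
The plan is to prove the three assertions in order: closedness of the shadow, the diameter bound, and the counting bound. For closedness, I would argue that if $\xi_j \in S^{(\varphi,c)}(Q)$ with $\xi_j \to \xi \in \partial\Omega$, then for each $j$ there is a curve $\gamma_j \in I^{(\varphi,c)}(\xi_j, x_0)$ meeting $Q$. Reparametrizing each $\gamma_j$ by arclength, the bound $\ell(\gamma_j) \leq \varphi(c\,\diam\Omega)$ gives a uniform Lipschitz bound, so by Arzel\`a--Ascoli a subsequence converges uniformly to a limit curve $\gamma$; exactly as in the proof of Lemma~\ref{connect to boundary}, lower semicontinuity of length together with continuity of $\varphi$ and $d_1$ shows $\gamma \in I^{(\varphi,c)}(\xi,x_0)$. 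Since $Q$ is (by its Whitney structure) a set with $B(x_Q, c_1\delta^k) \subset Q \subset B(x_Q, C_1\delta^k)$, I need the set $Q$ to be closed, or at least that $\gamma$ still meets $\overline{Q}$; the natural way around this is to note the curves $\gamma_j$ all meet the closed ball $\overline{B}^Q$ (since $Q \subset B^Q$) and to work with the shadow defined via $\overline{B}^Q$, or to observe that in fact $\gamma$ meets $\frac32 B^Q \subset\subset \Omega$ and then retract back into $Q$ using connectedness of the cube decomposition within the same generation. I expect the cleanest route is to pass through $\overline{B}^Q$ and then invoke that a $\varphi$-length John curve through any point of $\overline{B}^Q$ can be modified to one through $Q$ at comparable cost.

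For the diameter bound, fix $\xi, \xi' \in S^{(\varphi,c)}(Q)$ with witnessing curves $\gamma \in I^{(\varphi,c)}(\xi,x_0)$ and $\gamma' \in I^{(\varphi,c)}(\xi',x_0)$, and let $t_0$, $t_0'$ be parameters with $\gamma(t_0), \gamma'(t_0') \in Q$. Then
\[
d_1(\xi,\xi') \leq d_1(\xi, \gamma(t_0)) + \diam Q + d_1(\gamma'(t_0'), \xi')
\leq \ell(\gamma|_{[0,t_0]}) + \diam Q + \ell(\gamma'|_{[0,t_0']}).
\]
By the defining inequality \eqref{ccc}, $\ell(\gamma|_{[0,t_0]}) \leq \varphi(c\, d_1(\gamma(t_0), \partial\Omega))$, and since $\gamma(t_0) \in Q$ the Whitney property gives $d_1(\gamma(t_0), \partial\Omega) \leq d_1(Q, X\setminus\Omega) + \diam Q \lesssim C_1\delta^k \approx \diam Q$; similarly for $\gamma'$. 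Using $\varphi(t) \geq t$ and monotonicity of $\varphi$, the term $\diam Q$ is absorbed, and we get $d_1(\xi,\xi') \leq 2\varphi(Cc\,\diam Q) + \diam Q \leq 3\varphi(C'\,\diam Q)$ for a constant $C' = C'(c, c_0, C_1, \delta, a)$, which is the claimed estimate after relabeling.

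For the counting bound, fix $k \in \mathbb{Z}$ and $\xi \in \partial\Omega$, and let $Q \in \mathscr{W}_\Omega^k$ be any cube with $\xi \in S^{(\varphi,c)}(Q)$, with witnessing curve $\gamma$ meeting $Q$ at $\gamma(t_Q)$. As above, $d_1(\xi, \gamma(t_Q)) \leq \ell(\gamma|_{[0,t_Q]}) \leq \varphi(c\,d_1(\gamma(t_Q),\partial\Omega)) \leq \varphi(2C C_1\delta^k)$ for a suitable $C$, so the centers $z^k_\alpha$ of all such cubes lie in $B(\xi, 2\varphi(2CC_1\delta^k))$ — more precisely $B_Q = B(x_Q, c_1\delta^k)$ meets this ball. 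Since the balls $B_Q$, $Q \in \mathscr{W}_\Omega^k$, are pairwise disjoint (their doubled versions $B^Q$ have bounded overlap by Lemma~\ref{lambda}, and the $B_Q$ themselves are disjoint as $Q \supset B_Q$ are disjoint) and each has $\nu_1(B_Q) \geq C_q^{-1}(c_1\delta^k)^q$, while $\nu_1(B(\xi, C\varphi(2CC_1\delta^k)) \cap \Omega) \leq C_q (C\varphi(2CC_1\delta^k))^q$ by Ahlfors $q$-regularity, a volume-packing argument bounds the number of such cubes by $C_q^2 \big(\varphi(2CC_1\delta^k)/(c_1\delta^k)\big)^q$ after absorbing the numerical factor $C$ into the constant (and the statement as written takes this constant to be $1$, which amounts to choosing $C$ in the diameter estimate so that the comparison ball has radius exactly $\varphi(2CC_1\delta^k)$). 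The main obstacle is the bookkeeping in the first step — ensuring that "the curve meets $Q$" can be cleanly converted into a statement about a ball centered at $\xi$ with radius a controlled multiple of $\varphi(C_1\delta^k)$, with all the Whitney constants $c_0, C_1, \delta, a$ tracked correctly so that the final constant genuinely depends only on $(c, c_0, C_1, \delta, a)$; the disjointness of the $B_Q$ within a fixed generation and the Ahlfors regularity then make the packing estimate routine.
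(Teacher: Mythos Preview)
Your approach to both substantive estimates is essentially the paper's: bound $d_1(\xi,Q)$ by the arclength of the John curve up to its hitting time of $Q$, convert this via the $\varphi$-John condition and the Whitney property $d_1(Q,\partial\Omega)\lesssim\diam Q$ into $\varphi(C\diam Q)$ to get the diameter bound, and then for the count pack the pairwise disjoint inner balls $B_Q$ into $B(\xi,2\varphi(2CC_1\delta^k))$ using Ahlfors $q$-regularity. One remark: the paper's own proof actually omits the closedness assertion entirely; your Arzel\`a--Ascoli argument (mirroring Lemma~\ref{connect to boundary}) is the natural route, and your concern that the limit curve only meets $\overline{Q}$ rather than the half-open cube $Q$ is legitimate but is not addressed in the paper either.
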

\begin{proof}
	Let $\xi\in S^{(\varphi,c)}(Q)$. We can find a $\varphi$-length John curve $\gamma$ joining $\xi$ to $x_0$ in $\Omega$ so that $\gamma (t_Q)\in Q$ for some $t_Q\in [0,1]$. Since $\varphi$ is a continuous increasing function,
	$$d_{1}(\xi, Q) \leq d_{1}\left(\xi, \gamma\left(t_{Q}\right)\right)\leq
	l(\gamma[0,t_Q]))\leq
	\varphi\left(c d_{1}\left(\gamma\left(t_{Q}\right), \partial \Omega\right)\right) \leq \varphi(c(\diam Q+d_{1}(Q,\partial\Omega))) .$$
	Note that with	 $b=\frac{aC_1}{c_1\delta}$, we have
	$$d_{1}(Q,\partial\Omega)\leq aC_1\delta^{k-1}=bc_1\delta^k\leq b(\diam Q),$$
	Thus,$$d_{1}(\xi, Q) \leq \varphi(c(\diam Q+b(\diam Q))).$$
	Then,$$Q\subset B(\xi,2\varphi((c(b+1)+1)\diam Q)).$$
	Moreover,
	\begin{align*}
	\diam(S^{(\varphi,c)}(Q)) &= \sup\{d_{1}(x,y)|x,y\in S^{(\varphi,c)}(Q)\} \\
	&\leq \sup\{d_{1}(x,Q)+\diam Q+d_{1}(y,Q)|x,y\in S^{(\varphi,c)}(Q)\} \\
	&\leq 2\varphi(c(\diam Q+b(\diam Q)))+\varphi(\diam Q)\\
	&\leq 3\varphi((c(b+1)+1)\diam Q)=3\varphi(C\diam Q).
	\end{align*}
	Now, fix $\xi\in \partial\Omega$ and define
	$$a_k=\#\{Q\in \mathscr{W}^k_{\Omega}:\xi\in S^{(\varphi,c)}(Q)\}=\#\{Q\in \mathscr{W}^k_{\Omega}:Q\in P^{(\varphi,c)}(\xi)\},k\in \mathbb{Z}. $$
	Since the cubes $Q\in \mathscr{W}_{\Omega}^k$ are essentially disjoint, we have
	\begin{align*}
	a_k\cdot \frac{1}{C_{q}}(c_1\delta^k)^q &\leq a_k\cdot \nu_{1}(B(x,c_1\delta^k))\leq\sum_{Q\in \mathscr{W}_{\Omega}^k\bigcap P^{(\varphi,c)}(\xi)}\nu_{1}(Q)\\
	&\leq\nu_{1}(B(\xi,2\varphi((c(b+1)+1)\diam Q))\cap\Omega)\\
	&\leq C_{q}\Big(2\varphi((c(b+1)+1)\diam Q)\Big)^q \\
	&\leq C_{q}\Big(2\varphi(2(c(b+1)+1)C_1\delta^k)\Big)^q.
	\end{align*}
	Then,
	$$a_k\leq C_{q}^{2}\Big(\frac{2\varphi(2CC_1\delta^k)}{c_1\delta^k}\Big)^{q}.$$
\end{proof}
\begin{lemma}\label{cedu}
	Assume that $\mu$ is a Borel measure on $\partial \Omega$ and $E\subset \partial\Omega$ is measurable. Then for each $k\in\mathbb{Z}$ we have
	$$\sum_{Q\in \mathscr{W}^k_{\Omega}}\mu(S^{(\varphi,c)}_E(Q))\leq C_{q}^{2}\Big(\frac{2\varphi(CC_1\delta^k)}{c_1\delta^k}\Big)^{q}\mu(E).$$
\end{lemma}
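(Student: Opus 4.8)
The plan is to turn the sum over Whitney cubes into a pointwise multiplicity count and then quote the second estimate of Lemma~\ref{geshu}. First I would record that each set $S^{(\varphi,c)}_E(Q)$ is $\mu$-measurable: by Lemma~\ref{geshu} the full shadow $S^{(\varphi,c)}(Q)$ is closed, and directly from the definitions $S^{(\varphi,c)}_E(Q)=S^{(\varphi,c)}(Q)\cap E$, which is measurable since $E$ is. Since $X_{1}$ is doubling, each generation $\mathscr{W}^k_{\Omega}$ is at most countable, so $\sum_{Q\in\mathscr{W}^k_{\Omega}}\mathbf{1}_{S^{(\varphi,c)}_E(Q)}$ is a countable sum of measurable functions and Tonelli's theorem applies.

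Then I would write
\[
\sum_{Q\in\mathscr{W}^k_{\Omega}}\mu\big(S^{(\varphi,c)}_E(Q)\big)=\int_{\partial\Omega}\#\{Q\in\mathscr{W}^k_{\Omega}:\xi\in S^{(\varphi,c)}_E(Q)\}\,d\mu(\xi).
\]
Because $S^{(\varphi,c)}_E(Q)\subset E$, the integrand is supported on $E$; and for $\xi\in E$ one has $S^{(\varphi,c)}_E(Q)\subset S^{(\varphi,c)}(Q)$, so
\[
\#\{Q\in\mathscr{W}^k_{\Omega}:\xi\in S^{(\varphi,c)}_E(Q)\}\le \#\{Q\in\mathscr{W}^k_{\Omega}:\xi\in S^{(\varphi,c)}(Q)\}\le C_{q}^{2}\Big(\frac{\varphi(2CC_1\delta^k)}{c_1\delta^k}\Big)^{q}
\]
by the multiplicity bound of Lemma~\ref{geshu}. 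Inserting this uniform estimate into the integral over $E$ yields $\sum_{Q\in\mathscr{W}^k_{\Omega}}\mu(S^{(\varphi,c)}_E(Q))\le C_{q}^{2}\big(\varphi(2CC_1\delta^k)/(c_1\delta^k)\big)^{q}\mu(E)$, which is the asserted inequality after absorbing the numerical factor into the generic constant $C$ and using that $\varphi$ is increasing.

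No genuine difficulty is expected here: the whole argument is a Fubini-type rearrangement built on the pointwise count already proved in Lemma~\ref{geshu}. The only points requiring a word of care are (i) the measurability of the shadows, which follows from their closedness, and (ii) the legitimacy of exchanging summation and integration, which needs $\mathscr{W}^k_{\Omega}$ to be at most countable — guaranteed because $X_{1}$ is a doubling metric space and the cubes of a fixed generation are pairwise disjoint with measures comparable to $\delta^{kq}$.
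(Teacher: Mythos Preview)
Your proposal is correct and follows exactly the paper's approach: write each $\mu(S^{(\varphi,c)}_E(Q))$ as an integral of an indicator, swap sum and integral by Tonelli, and bound the resulting multiplicity function pointwise using the second estimate of Lemma~\ref{geshu}. The only difference is that you spell out the measurability and countability justifications that the paper leaves implicit.
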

\begin{proof}
	Lemma \ref{geshu} implies that
	\begin{align*}
	\sum_{Q\in \mathscr{W}^k_{\Omega}}\mu(S^{(\varphi,c)}_E(Q)) &=\sum_{Q\in \mathscr{W}^k_{\Omega}}\int_E\chi_{S^{(\varphi,c)}_E(Q)}(\omega)d\mu(\omega)\\
	&=\int_E\sum\limits_{Q\in \mathscr{W}^k_{\Omega}}\chi_{S^{(\varphi,c)}_E(Q)}(\omega)d\mu(\omega)  \leq C_{q}^{2}\Big(\frac{2\varphi(C(c)C_1\delta^k)}{c_1\delta^k}\Big)^{q}\mu(E).
	\end{align*}
\end{proof}
Now we give the definition of discrete length of a curve.
\begin{definition}\label{disctete length}
	Let $\Omega\subset X_{1}$ be a $\varphi$-length John domain with center $x_{0}$. Assume that $\xi\in\partial\Omega$ and $\gamma\in I^{(\varphi,c)}(\xi,x_{0}).$ Given a continuous mapping $f:\Omega\rightarrow X_{2}$, we define the discrete length of $f(\gamma)$ by $$\ell_{d}[f(\gamma)]:=\sum_{Q\in \mathscr{W}_{\Omega},Q\cap\gamma\neq\emptyset}\diam f(Q).$$
\end{definition}
\begin{lemma}
	If $\ell_{d}[f(\gamma)]<\infty$, then $\lim\limits_{t\rightarrow 0^{+}}f(\gamma(t))$ exists.
\end{lemma}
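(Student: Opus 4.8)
The plan is to show that $t\mapsto f(\gamma(t))$ is a Cauchy net as $t\to 0^+$ and then invoke completeness of $X_2$. The quantitative input is that the Dyadic--Whitney cubes that $\gamma$ meets contribute, through their $f$-images, a convergent series, so the part of $\gamma$ close to the endpoint $\xi\in\partial\Omega$ is mapped into an arbitrarily small set.

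The first step I would carry out is a chaining estimate: for $0<s<t\le 1$,
\[
d_2\bigl(f(\gamma(s)),f(\gamma(t))\bigr)\le\sum_{\substack{Q\in\mathscr W_\Omega\\ Q\cap\gamma([s,t])\ne\emptyset}}\diam f(Q),
\]
and the sum on the right is finite. Finiteness is routine: $\gamma([s,t])$ is a compact subset of the open set $\Omega$, so $\rho_0:=d_1(\gamma([s,t]),X\setminus\Omega)>0$; if a cube $Q$ of generation $k$ meets $\gamma([s,t])$ then, by property (3) of Definition~\ref{dyadic-whitney decomposition} and the triangle inequality, $\rho_0\le \diam Q+d_1(Q,X\setminus\Omega)\le C_1\delta^k(2+a/\delta)$, so $\delta^k$ is bounded below; Ahlfors $q$-regularity then gives $\nu_1(Q)\ge\nu_1(B_Q)\gtrsim (c_1\delta^k)^q$ bounded below, and since the cubes are pairwise disjoint inside the bounded (hence finite-measure) set $\Omega$, only finitely many can occur. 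For the inequality itself, write $K=\gamma([s,t])$ and let $Q_1,\dots,Q_m$ be the finitely many cubes meeting $K$; the closed sets $\overline{Q_i}\cap K$ cover the connected set $K$, so the nerve graph on $\{1,\dots,m\}$ (with an edge $ij$ whenever $\overline{Q_i}\cap\overline{Q_j}\cap K\ne\emptyset$) is connected. Choosing a simple path in this graph from the index of the cube containing $\gamma(s)$ to that of the cube containing $\gamma(t)$, selecting a point of $K$ in each consecutive intersection, and telescoping along these points (each consecutive pair lying in a common $\overline{Q_i}$), I obtain the claimed bound, using that $f(\overline{Q_i})\subset\overline{f(Q_i)}$ has the same diameter as $f(Q_i)$ by continuity of $f$.

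The second step is to localise near $\xi$. Set $\mathcal Q=\{Q\in\mathscr W_\Omega:Q\cap\gamma\ne\emptyset\}$, so that $\sum_{Q\in\mathcal Q}\diam f(Q)=\ell_d[f(\gamma)]<\infty$. Given $\varepsilon>0$, I pick a finite set $F\subset\mathcal Q$ with $\sum_{Q\in\mathcal Q\setminus F}\diam f(Q)<\varepsilon$. Because $\gamma(t)\to\xi\in\partial\Omega$ while every cube satisfies $d_1(\overline Q,X\setminus\Omega)=d_1(Q,X\setminus\Omega)\ge (a-2)C_1\delta^k>0$, the point $\xi$ lies in no $\overline Q$; hence for each $Q\in F$ there is $\tau_Q>0$ with $\gamma(t)\notin Q$ whenever $0<t<\tau_Q$, and $\delta_0:=\min_{Q\in F}\tau_Q>0$. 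Then for $0<s<t<\delta_0$ every cube meeting $\gamma([s,t])$ belongs to $\mathcal Q\setminus F$, and Step~1 gives $d_2(f(\gamma(s)),f(\gamma(t)))\le\sum_{Q\in\mathcal Q\setminus F}\diam f(Q)<\varepsilon$. Thus $f(\gamma(t))$ is Cauchy as $t\to0^+$, and since $X_2$ is complete the limit $\lim_{t\to0^+}f(\gamma(t))$ exists.

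The step I expect to be the main obstacle is the chaining estimate in Step~1: in a general metric space the Dyadic--Whitney cubes are merely dyadic cubes whose closures may meet, the curve may enter and leave a cube repeatedly, and there is no order structure on the cubes to ``walk along''. The device that resolves this is the connectedness of the nerve of a finite closed cover of a connected set; one has to be a little careful to take the intersection points used in the telescoping inside $K$ rather than in the ambient space, so that the cover argument applies, and to observe that passing to closures does not change the diameters of the $f$-images.
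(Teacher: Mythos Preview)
Your proof is correct and follows the same Cauchy-criterion strategy as the paper: both bound $d_2(f(\gamma(s)),f(\gamma(t)))$ by a tail of the series $\sum_{Q}\diam f(Q)$ once $s,t$ are sufficiently small, and then invoke completeness of $X_2$. The paper truncates by generation level (cubes in $\mathscr W_\Omega^k$ with $k\ge N$) and appeals to Lemma~\ref{geshu}, whereas you truncate by an arbitrary finite subfamily and supply explicitly, via the nerve of a finite closed cover of a connected set, the chaining inequality that the paper uses without justification; apart from these organisational differences the arguments coincide.
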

\begin{proof}
Fix $\epsilon >0$. As
\[
\ell_{d}[f(\gamma)]=\sum_{\substack{Q\in \mathscr{W}_{\Omega}\\Q\cap\gamma\neq\emptyset}}\diam f(Q)<\infty,
\]
we deduce that \[\sum_{k=1}^{\infty}\sum_{\substack{Q\in \mathscr{W}_{\Omega}^k\\Q\cap\gamma\neq\emptyset}}\diam f(Q)<\infty.\]
Thus, there exists a constant $N=N(\epsilon)$ such that \[\sum_{k=N}^{\infty}\sum_{\substack{Q\in \mathscr{W}_{\Omega}^k\\Q\cap\gamma\neq\emptyset}}\diam f(Q)<\epsilon.\]

We denote by $\gamma_0$ the part of $\gamma$ that intersects the dyadic cubes $Q\in \mathscr{W}_{\Omega}^{k}$ with $k\leq N$. Then, by Lemma \ref{geshu}, for fixed $k$, $\#\{Q\in \mathscr{W}_{\Omega}^k:\xi \in S^{(\varphi,c)}(Q)\}\leq C(k,q)$. Then, the number of  dyadic cubes meets $\gamma_0$ is finite. Thus the distance between $\gamma_0$ and $\partial \Omega$ is positive.

Choose any sequence of positive numbers $\{t_i\}_{i=1}^{\infty}$ tending to zero. Then, there exists $N^{\prime}$ such that for any $i\geq N^{\prime}$, $\gamma(t_i)$ does not belong to $\gamma_0$. So these points belong to $\mathscr{W}_{\Omega}^{k}$ with $k\geq N+1$. Fix any $m,l\geq N^{\prime}$. As the part of $\gamma$ between $\gamma(t_m)$ and $\gamma(t_l)$ only meets finitely many Dyadic-Whitney cubes belonging to $\mathscr{W}_{\Omega}^{k}$ with $k\geq N+1$,\[d(f(t_m),f(t_l))\leq \sum_{k=N}^{\infty}\sum_{\substack{Q\in \mathscr{W}_{\Omega}^k\\Q\cap\gamma\neq\emptyset}}\diam f(Q)<\epsilon.\]
Therefore, $\{f(\gamma (t_i))\}$ is a Cauchy sequence and hence the limit exists.

\end{proof}
Next, we prove our first main result.
\begin{proof}[Proof of Theorem \ref{thm:limit exist}]
	Fix any $1<\sigma\leq \lambda_0$, where $\lambda_0$ is given by Lemma \ref{lambda}. For simplicity, we write $S_{E}(Q)$ for $S^{(\varphi,c)}_{E}(Q).$ Our aim is to prove that $\mathcal{H}^{h}(A_{\infty})=0,$ where $A_{\infty}$ is the set of points $\xi\in\partial\Omega$ for which there is a curve $\gamma\in I^{(\varphi,c)}(\xi,x_{0})$ such that $\ell_{d}[f(\gamma)]=\infty$. Since $E_{f}\subset A_{\infty}$, $\mathcal{H}^{h}(E_{f})=0.$
	
On the contrary, we assume that $\mathcal{H}^{h}(A_{\infty})>0$. Then $\mathcal{H}^{h}(A_{k})>0$, where $A_{k}$ is the set of points  $\xi\in\partial\Omega$ for which there exists $\gamma\in  I^{(\varphi,c)}(\xi,x_{0})$ so that $\ell_{d}[f(\gamma)]\geq k$. Then by Frostman's lemma (see for instance \cite[Theorem 2]{bo05}), there exists a Borel measure $\mu$ supported in $A_{k}$ so that for every $B(x,r)\subset X_{1},$ \begin{equation}\label{160}
\mu(B(x,r))\leq h(r),\end{equation}
and $$\mu(A_{k})\approx\mathcal{H}^{h}_{\infty}(A_{k})\geq\mathcal{H}^{h}_{\infty}(A_{\infty})>0,$$
where $\mathcal{H}^{h}_{\infty}(A_k)$ is the usual Hausdorff $h$-content of $A_k$. By the definition of $A_{k}$ and the definition of the discrete length,
\begin{align*}
\mu(A_{k})k\leq \int_{A_{k}}\ell_{d}[f(\gamma_{\omega})]d\mu(\omega)\leq\int_{A_{k}}\sum_{Q\in \mathscr{W}_{\Omega},Q\cap\gamma_{\omega}\neq\emptyset}\diam f(Q) d\mu(\omega),
	\end{align*}
where $\gamma_{\omega}$ is the curve associated with $\omega$ in the definition of $A_k$.
From now on, we assume that $f\in\mathcal{A}_{2}(\sigma)$. Then we have the following inequalities:
	\[
	\begin{aligned}
	\mu(A_{k})k & \leq\int_{A_{k}}\sum_{Q\in \mathscr{W}_{\Omega},Q\cap\gamma_{\omega}\neq\emptyset}\diam f(B^{Q})d\mu(\omega)\\
	& \leq\sum_{Q\in \mathscr{W}_{\Omega}}\int_{A_{k}}\chi_{S(Q)}(\omega)d\mu(\omega)\diam f(B^{Q})\leq\sum_{Q\in \mathscr{W}_{\Omega}}\mu(S_{A_{k}}(Q))\diam f(B^{Q}). \\
	\end{aligned}
    \]
	Since $f\in\mathcal{A}_{2}(\sigma),$ for every $B=B(x,r)$ for which $\sigma B\subset\subset\Omega,$
	$$\diam f(B)\leq C\bigg(\int_{\sigma B}\alpha(x)d\nu_{1}(x)\bigg)^{\frac{1}{q}}\log\bigg(\frac{1}{\diam B}\bigg)^{\frac{1}{q}}.$$
By \eqref{150}, we have that $\sigma B^Q\subset\subset \Omega$.
Then,
	\begin{align*}
	\mu(A_{k})k & \leq C\sum_{Q\in \mathscr{W}_{\Omega}}\bigg(\mu(S_{A_{k}}(Q))\log^{\frac{1}{q}}\bigg(\frac{1}{\diam B^{Q}}\bigg)\bigg)\times \Big(\int_{\sigma B^{Q}}\alpha(x)d\nu_{1}(x)\Big)^{\frac{1}{q}} \\
	& \leq C\Big(\sum_{Q\in \mathscr{W}_{\Omega}}\mu(S_{A_{k}}(Q))^{\frac{q}{q-1}}\log^{\frac{1}{q-1}}\bigg(\frac{1}{\diam B^{Q}}\bigg)\Big)^{\frac{(q-1)}{q}}\times\Big(\sum_{Q\in \mathscr{W}_{\Omega}}\int_{\sigma B^{Q}}\alpha(x)d\nu_{1}(x)\Big)^{\frac{1}{q}}.
	\end{align*}
	To simplify the inequalities, we use the fact that $c_1\delta ^j\leq \diam \,B^{Q}\leq 2C_1\delta ^j$ for $Q\in \mathscr{W}_{\Omega}^{j}$ and the property that $\sigma B^{Q}$ have uniformly bounded overlap. By Lemma $\ref{geshu}$, $\sigma  B^{Q}\subset\subset\Omega$ and $\alpha\in L^{1}(\Omega)$
	 we have,
	 $$\big(\sum_{Q\in
	 	\mathscr{W}_{\Omega}}\int_{\lambda B'_{Q}}\alpha(x)d\nu_{1}(x)\big)^{\frac{1}{q}}\leq C_2.$$
	Therefore,
	
	$$\mu(A_{k})k \leq C^{\prime}\Big(\sum_{j=1}^{\infty}\sum_{Q\in \mathscr{W}_{\Omega}^{j}}\mu(S_{A_{k}}(Q))^{\frac{q}{q-1}}j^{\frac{1}{q-1}}\Big)^{\frac{(q-1)}{q}},$$ where $C^{\prime}=C\big(C_2,   C_{1},c_{1}\big)$.
	Using Lemma \ref{cedu},
	$$\sum_{Q\in \mathscr{W}^j_{\Omega}}\mu(S_{A_{k}}(Q))\leq C_{q}^{2}\Big(\frac{2\varphi(CC_1\delta^j)}{c_1\delta^{j}}\Big)^{q}\mu(A_{k}).$$
Then, by Lemma~\ref{geshu}, \[\diam (S(Q))\leq 3\varphi(C\diam Q)\leq 3\varphi(2CC_1\delta^j).\] Thus, by \eqref{160}, we have the following estimates:
	
	\begin{align*}
	\sum_{Q\in \mathscr{W}_{\Omega}^{j}}\mu(S_{A_{k}}(Q))^{\frac{q}{q-1}}
	& \leq
	\max_{Q\in \mathscr{W}_{\Omega}^{j}}\mu(S_{A_{k}}(Q))^{\frac{1}{q-1}}\sum_{Q\in \mathscr{W}_{\Omega}^{j}}\mu(S_{A_{k}}(Q))\\
	& \leq
	C_{q}^{2}\Big(\frac{2\varphi(2CC_1\delta^j)}{c_1\delta^j}\Big)^{q}\max_{Q\in \mathscr{W}_{\Omega}^{j}}\mu(S(Q))^{\frac{1}{q-1}}\mu(A_{k}) \\
	& \leq C_{q}^{2}\Big(\frac{2\varphi(2CC_1\delta^j)}{c_1\delta^j}\Big)^{q}h\big(3\varphi (2CC_1\delta ^j)\big)^{\frac{1}{q-1}}\mu(A_{k}).
	\end{align*}
	Putting these estimates together gives $$\mu(A_{k})k\leq C\mu(A_{k})^{\frac{(q-1)}{q}}\Big(\sum_{j=1}^{\infty}C_{q}^{2}[\frac{2\varphi(2CC_1\delta^j)}{c_1\delta^j}]^{q}h\big(3\varphi (2CC_1\delta ^j)\big)^{\frac{1}{q-1}}j^{\frac{1}{q-1}}\Big)^{\frac{(q-1)}{q}}.$$
By \eqref{210} and the doubling property of $h$, we know that the second term on the right side is finite and independent of $k$. Then, by $\mu(A_k)<\infty$, we get $\mu^{\frac{1}{q}}(A_{k})k\leq C^{\prime}$ for every $k\in\mathbb{N}$, where $C^{\prime}$ is independent of $k$. Therefore, $\mu(A_{k})\rightarrow 0$ as $k\rightarrow \infty$. This is a contradiction. For $f\in\mathcal{A}_{1}(\sigma)$ we do not get the term $j^{\frac{1}{(q-1)}}$ in the last inequality and by an obvious modification of the above proof we can prove the similar conclusion.
\end{proof}
\subsection{Uniqueness of limits along John curves}\label{Uniqueness of limits along John curves}
From Section $\ref{Existence of limits along curves}$, if $\Omega$ is a $\varphi$-length John domain, then we may define an extension $\tilde{f}:\partial\Omega\rightarrow X_{2}$  of $f$ that satisfies $$\tilde{f}(\omega)=\lim_{t\rightarrow 0^{+}}f(\gamma_{\omega}(t)),$$ where $\gamma_{\omega}$ is a $\varphi$-length John curve connecting $x_{0}$ and $\omega$. As observed in \cite[Example 3.7]{ag17}, this extension might not be well-defined as the limit might be different if we change the $\varphi$-length John curve that connects $x_0$ and $\omega$.

In the rest of this section, we write $I^{c}(\xi,x_{0})$ to denote the class of all $c$-John curves connecting $\xi$ to $x_{0}$ and $$S^{c}(Q)=\{\xi\in \partial\Omega:Q\in P^{c}(\xi)\} $$ where $P^{c}(\xi)=\{Q\in \mathscr{W}_{\Omega}:Q\cap\gamma\neq\emptyset \text{ for some } \gamma\in I^{c}(\xi,x_{0})\}.$ We will show that if $\Omega$ is a $c$-uniform domain with center $x_{0},$ then $f$ can be extended to $\partial \Omega$ in a unique way along $c$-John curves.

Next, we need the growth of quasihyperbolic distance in the metric space setting. Since the proof is completely similar to the Euclidean case (see \cite[Lemma 5.2]{g15}), we omit it here.
\begin{lemma}\label{komega}
	Let $\Omega \subset X_{1}$ be a uniform domain. Then there exists a positive constant $C$, depending only on the data, such that
	$$
	k_{\Omega}(x, y) \leq C \int_{\min \{d_{1}(x, \partial \Omega), d_{1}(y, \partial \Omega)\}}^{C d_{1}(x, y)}\frac{1}{s}d s+2
	$$
	for each pair $x, y$ of points in $\Omega$.
\end{lemma}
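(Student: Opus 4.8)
The plan is to reproduce the classical Euclidean argument of \cite[Lemma 5.2]{g15} almost verbatim, checking only that its two ingredients---the ``cigar'' estimate coming from the uniform condition and the $1$-Lipschitz dependence of $z\mapsto d_1(z,\partial\Omega)$---survive the passage to metric spaces, which they do trivially. Recall that $k_\Omega(x,y)=\inf_\gamma\int_\gamma ds/d_1(z,\partial\Omega)$, the infimum being over rectifiable curves in $\Omega$ joining $x$ to $y$, so it suffices to produce one good curve and bound the integral along it. By the definition of a uniform domain, fix a curve $\gamma$ joining $x$ and $y$ in $\Omega$ with $\ell(\gamma)\le c\,d_1(x,y)$ and $\min\{\ell(\gamma(x,z)),\ell(\gamma(y,z))\}\le c\,d_1(z,\partial\Omega)$ for all $z\in\gamma$, reparametrized by arclength as $\gamma\colon[0,\ell]\to\Omega$ with $\gamma(0)=x$, $\gamma(\ell)=y$ and $\ell=\ell(\gamma)$. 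Write $d_x=d_1(x,\partial\Omega)$, $d_y=d_1(y,\partial\Omega)$, $m=\min\{d_x,d_y\}$ and, without loss of generality, $m=d_x$.

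The heart of the estimate is the pointwise lower bound
\[
d_1(\gamma(t),\partial\Omega)\ \ge\ \max\{\,d_x-t,\ t/c\,\}\qquad(0\le t\le \ell/2),
\]
where $d_x-t$ is the triangle inequality (the $1$-Lipschitz property of the distance function, valid in any metric space) and $t/c$ is the cigar condition, since $\min\{t,\ell-t\}=t$ there. The two bounds agree at $t_\ast=cd_x/(c+1)$, with common value $d_x/(c+1)$, so I would split $\int_0^{\ell/2}dt/d_1(\gamma(t),\partial\Omega)$ at $t_\ast$ (stopping at $\ell/2$ if $\ell/2\le t_\ast$): the contribution of $[0,\min\{t_\ast,\ell/2\}]$ is $\le\int_0^{t_\ast}dt/(d_x-t)=\log(c+1)$, and the contribution of $[t_\ast,\ell/2]$ is $\le\int_{t_\ast}^{\ell/2}c\,dt/t=c\log\frac{(c+1)\ell}{2cd_x}\le c\log(1+\ell/d_x)$ (using $c\ge1$; when this second piece is nonempty one automatically has $\ell/d_x>1$, so the logarithm is positive). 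Carrying out the mirror-image computation on $[\ell/2,\ell]$ with $d_y$ in place of $d_x$---which only helps, as $d_y\ge d_x$---gives
\[
k_\Omega(x,y)\ \le\ \int_\gamma\frac{ds}{d_1(z,\partial\Omega)}\ \le\ 2\log(c+1)+2c\log\Big(1+\frac{\ell}{m}\Big).
\]

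It remains to insert $\ell\le c\,d_1(x,y)=:cd$ and rewrite the bound using $\int_m^{Cd}ds/s=\log(Cd/m)$. If $cd\le m/2$, then $\ell\le m/2$, hence $d_1(\gamma(t),\partial\Omega)\ge m-\ell\ge m/2$ along $\gamma$ and $k_\Omega(x,y)\le 2\ell/m\le1$; this is the regime absorbed by the additive $2$ (and where the integral, whose lower limit then exceeds its upper limit, is read as $0$). If $cd>m/2$, then $1+cd/m\le 3cd/m$, so choosing $C\ge 3c$ gives $\log(1+\ell/m)\le\log(1+cd/m)\le\log(Cd/m)=\int_m^{Cd}ds/s$; enlarging $C$ further so that $\log(C/2c)\ge 2\log(c+1)$ (legitimate because $m<2cd$) lets the constant $2\log(c+1)$ be absorbed into the same integral, and we obtain $k_\Omega(x,y)\le(1+2c)\int_m^{Cd}ds/s+2$ with $C=C(c)$; replacing $C$ by the maximum of the constants used keeps a single constant depending only on the data.

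I do not expect a genuine obstacle: every step is the Euclidean one of \cite{g15}, and the metric-space inputs---existence of the rectifiable cigar curve, which is built into the definition of a uniform domain, and measurability of $t\mapsto d_1(\gamma(t),\partial\Omega)$, which is $1$-Lipschitz---are immediate; since only an upper bound for $k_\Omega$ is needed, the infimum causes no trouble. The only mildly delicate points are bookkeeping: using $\min\{t,\ell-t\}$ (not $\max$) in the cigar condition, and treating the degenerate range $C\,d_1(x,y)\le m$, where the logarithm is nonpositive and the claim is trivial thanks to the $+2$.
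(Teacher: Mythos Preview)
Your proposal is correct and follows exactly the approach the paper itself adopts: the paper omits the proof entirely, remarking that it is ``completely similar to the Euclidean case (see \cite[Lemma~5.2]{g15})'', and what you have written is precisely that Euclidean argument, with the routine verification that the two ingredients (the cigar condition and the $1$-Lipschitz property of $z\mapsto d_1(z,\partial\Omega)$) carry over to metric spaces. There is nothing to add.
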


\begin{lemma}\label{Geshu}
	Let $\Omega \subset X_{1}$ be a $c_{0}$ uniform domain with center $x_{0}$ and $ \mathscr{W}_{\Omega}$ be a Dyadic-Whitney decomposition of $\Omega .$ Then there exists a constant $\mathrm{C}$ such that for any $\mathrm{s}>0$ and pair $x_{1}, x_{2} \in \Omega$ with $d_{1}(x_{2}, \partial \Omega) \geq s, d_{1}(x_{1}, \partial \Omega) \geq s$ and $d_{1}(x_{1},x_{2}) \leq c_{1}s$ there exists a chain of Dyadic-Whitney cubes
	$\left\{Q_{k}\right\}_{k=1}^{N}$ connecting points $x_{1}$ and $x_{2}$ such that the number of cubes is uniformly bounded with respect to $s$, i.e. $N \leq C$, where $C$ depends only on $q,c_{0},c_{1}$ but not on s.
\end{lemma}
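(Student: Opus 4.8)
The plan is to route the whole argument through the quasihyperbolic metric $k_\Omega$, where $k_\Omega(x,y)$ denotes the infimum of $\int_\gamma d_1(z,\partial\Omega)^{-1}\,|dz|$ over rectifiable curves $\gamma\subset\Omega$ joining $x$ to $y$. The point is that Lemma~\ref{komega} converts the two geometric hypotheses on $x_1,x_2$ into a purely data-dependent bound on $k_\Omega(x_1,x_2)$, and that a pair of points at bounded quasihyperbolic distance can always be joined by a chain of boundedly many Dyadic-Whitney cubes. Concretely, since $\min\{d_1(x_1,\partial\Omega),d_1(x_2,\partial\Omega)\}\ge s$ and $d_1(x_1,x_2)\le c_1 s$, Lemma~\ref{komega} yields
\[
k_\Omega(x_1,x_2)\ \le\ C\int_s^{Cc_1 s}\frac{dt}{t}+2\ \le\ C\big(\log(Cc_1)\big)_{+}+2\ =:\ C',
\]
a constant depending only on $q$, $c_0$, $c_1$ and crucially independent of $s$ and of the points themselves. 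By definition of $k_\Omega$ there is then a rectifiable curve $\gamma$ in $\Omega$ joining $x_1$ to $x_2$ with $\int_\gamma d_1(z,\partial\Omega)^{-1}\,|dz|\le C'+1$.

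The heart of the matter is that such a curve lives at a single scale. Using the elementary estimate $|\log d_1(z,\partial\Omega)-\log d_1(w,\partial\Omega)|\le k_\Omega(z,w)$ (immediate from the $1$-Lipschitz property of $z\mapsto d_1(z,\partial\Omega)$ and the definition of $k_\Omega$), applied with $w=x_1$ and any $z\in\gamma$, and bounding $k_\Omega(x_1,z)$ by the quasihyperbolic length of $\gamma$ up to $z$, one obtains $e^{-(C'+1)}D\le d_1(z,\partial\Omega)\le e^{C'+1}D$ for every $z\in\gamma$, where $D:=d_1(x_1,\partial\Omega)\ge s$. Feeding this back into the quasihyperbolic length bound gives $\ell(\gamma)\le e^{C'+1}(C'+1)D$, so $\gamma$ lies in a ball $B(x_1,cD)$; and recalling that $d_1(Q,\partial\Omega)\approx\diam Q\approx\delta^{k}$ for $Q\in\mathscr{W}_\Omega^{k}$ (as in \eqref{110}), every Dyadic-Whitney cube meeting $\gamma$ has diameter comparable to $D$, hence is contained in a single ball $B(x_1,c'D)$ while itself containing a ball of radius $\gtrsim D$, with all comparability constants depending only on $C'$ and the data $(\delta,c_1,C_1,a)$. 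Since distinct Dyadic-Whitney cubes are pairwise disjoint, the Ahlfors $q$-regularity of $\Omega$ (only its doubling property is really used) bounds the number of cubes that $\gamma$ meets by a volume count, giving a constant $N_0=N_0(q,c_0,c_1)$; being a ratio of $D$-powers, this is independent of $D$ and therefore of $s$.

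Finally, the finitely many cubes met by $\gamma$ have connected union containing $x_1$ and $x_2$, so after deleting repetitions one extracts from them a chain $\{Q_k\}_{k=1}^{N}$ with $Q_1\ni x_1$, $Q_N\ni x_2$, consecutive cubes touching, and $N\le N_0$. I expect the only genuinely delicate point to be the scale-invariance bookkeeping: the hypotheses bound $D=d_1(x_1,\partial\Omega)$ only from below, so $D$ may be arbitrarily large compared with $s$, and one must phrase the length estimate, the "all relevant cubes have diameter $\approx D$" step, and the packing estimate all in terms of $D$, after which the $D$-dependence cancels and leaves a bound depending only on $q,c_0,c_1$. The subordinate concern — that a general metric space need not admit quasihyperbolic geodesics — causes no trouble, since an almost-minimizing curve exists as soon as $k_\Omega(x_1,x_2)<\infty$, which Lemma~\ref{komega} guarantees.
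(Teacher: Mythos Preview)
Your proof is correct and follows the same overall route as the paper --- bound $k_\Omega(x_1,x_2)$ via Lemma~\ref{komega}, then convert that into a bound on the number of Dyadic-Whitney cubes along a curve joining the two points --- but the conversion step is handled differently. The paper simply takes a quasihyperbolic geodesic and invokes \cite{bhk01} for the black-box fact that the number of Whitney-type cubes along such a geodesic is comparable to $k_\Omega(x_1,x_2)$. You instead give a self-contained argument: an almost-minimizing curve $\gamma$ stays at scale $\approx D=d_1(x_1,\partial\Omega)$ by the $\log$-Lipschitz estimate for $d_1(\cdot,\partial\Omega)$ under $k_\Omega$, hence has Euclidean length $\lesssim D$, and then a packing/volume count using the Ahlfors regularity bounds the number of disjoint cubes of diameter $\approx D$ meeting it. This buys you two things: you avoid relying on the existence of quasihyperbolic geodesics (which the paper tacitly assumes, though it holds here since $\Omega$ is proper), and you make transparent exactly why the bound is scale-invariant --- the $D$'s cancel in the volume ratio. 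The paper's version is shorter but opaque; yours is longer but makes the mechanism visible and is arguably more robust for the general metric setting the paper is working in.
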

\begin{proof}
	Fix $x_{1},x_{2}\in\Omega$ satisfying the conditions in the lemma. Then we can connect $x_{1}, x_{2}$ by a quasihyperbolic geodesic $[x_{1},x_{2}]_{k}$ and get a chain of Dyadic-Whitney cubes $Q \in  \mathscr{W}_{\Omega}$ that intersect $[x_{1},x_{2}]_{k}$ in a such way that $x_{1}\in Q_{1},x_{2}\in Q_{N}$ and $Q_{i}\cap Q_{i+1}\neq\emptyset$ for every $i$. By \cite{bhk01} we know that the number of cubes in this chain is comparable to $k_{\Omega}(x_{1}, x_{2})$, joining $x_1$ to $x_2$. Since $\Omega$ is uniform, by Lemma \ref{komega}, there exists a constant $C>0$ which does not depend on $s$ such that
	$$
k_{\Omega}(x_1, x_2) \leq C \int_{\min \{d_{1}(x_1, \partial \Omega), d_{1}(x_2, \partial \Omega)\}}^{C d_{1}(x_1, x_2)}\frac{1}{s}d s+2.
	$$
Thus, the claim follows.
\end{proof}

\begin{proof}[Proof of Theorem \ref{thm:limit is unique}]
	Fix any $1<\sigma\leq \lambda_0$, where $\lambda_0$ is given by Lemma \ref{lambda}. According to the proof of Theorem~\ref{thm:limit exist}, we know that for $\mathcal{H}^{h}$-almost every point $\xi\in\partial\Omega$, $f$ has a limit for every $\gamma\in I^{c}(\xi,x_{0})$. Fix $\xi\in \partial\Omega$ as above. Then, we show that along two such curves $\gamma,\eta$, the limits of $f$ are the same.
Given a constant $s>0$, let $t_{1}, t_{2} \in[0,1]$ be such that \begin{equation}\label{230}\ell(\gamma |_{(\gamma\left(t_{1}\right),w)})=s \quad\text{ and }\quad
 \ell(\gamma |_{(\eta\left(t_{1}\right),w)})=s.\end{equation} Since $\gamma, \eta$ are both $c$-John curves, this implies that
\begin{equation}\label{220}
	cd_{1}\left(\gamma\left(t_{1}\right), \partial \Omega\right) \geq {s} \quad\text { and }\quad  cd_{1}\left(\eta\left(t_{2}\right), \partial \Omega\right) \geq {s} .
\end{equation}
	 Then we can find a chain of Dyadic-Whitney cubes $\left\{Q_{i}\right\}$, connecting $\gamma(t_1)$ and $\eta(t_2)$, and the number of the cubes is uniformly bounded by Lemma $\ref{Geshu}$. Set $y_{1}=\gamma\left(t_{1}\right)$ and $y_{N+1}=\eta\left(t_{2}\right)$. Choose $y_{i} \in Q_{i}$($2\leq i\leq N$) so that $y_i$,$y_{i+1}$ belong to the same ball $B^{Q_i}$ for all $1\leq i\leq N$.
	Since $f\in \mathcal{A}_{2}(\sigma)$,
	\begin{align*}
	d_{2}(f\left(\gamma\left(t_{1}\right)\right),f\left(\eta\left(t_{2}\right)\right))&\leq \sum_{i=1}^{N}d_{2}(f\left(y_{i+1}\right),f\left(y_{i}\right))\\
	& \leq C_{1} \sum_{i=1}^{N}\left(\int_{\sigma B^{Q_{i}}} \alpha(y) \mathrm{d}\nu_{1}(y)\right)^{1 / q} \log \bigg( \frac{1}{\diam Q_{i}}\bigg)^{1 / q} \\
	& \leq c\left(C, C_{1}\right) \max _{1 \leq i \leq N}\left(\int_{{\sigma} B^{Q_{i}}} \alpha(y) \mathrm{d}\nu_{1}(y)\right)^{1 / q} \log \bigg( \frac{1}{\diam Q_{i}}\bigg)^{1 / q}.
	\end{align*}
	
We continue the proof by contradiction. Assume that these two limits are not the same. Then, there exists $\delta>0$ so that the two limits are at distance $\geq 4\delta$. By choosing $s$ small enough, we can assume that \[d_{2}(f\left(\gamma\left(t_{1}\right)\right),f\left(\eta\left(t_{2}\right)\right))\geq \delta.\] Thus, there exists a cube $Q_s$ associated with $s$ such that \begin{equation}\label{240}\int_{{\sigma} B^{Q_s}} \alpha(y) \mathrm{d}\nu_{1}(y) \log \bigg( \frac{1}{\diam Q_s}\bigg)\geq C(\delta,q).\end{equation}
As the number of cubes is less than $N$, the diameter of this cube are all comparable to $d(\gamma (t_1),\partial \Omega)$. Moreover, \eqref{220} and \eqref{230} tell us that
$d(\gamma (t_1),\partial \Omega)\approx s$.
Therefore, we can choose a sequence $\{s_i\}$ such that the cube $Q_i$ associated with $s_i$ satisfies $\diam Q_i\approx e^{-i}$.
Thus, by \eqref{240}, \[ \frac{1}{i}\lesssim\int_{{\sigma} B^{Q_i}} \alpha(y) \mathrm{d}\nu_{1}(y).\]

\textbf{Claim}: The number of Dyadic-Whitney cubes that are the same in the sequence is less than a constant.

Indeed, assume $Q_i,\cdots,Q_{i+k}$ are the same Dyadic-Whitney cubes. It suffices to prove that $k\leq C$. By the definition of Dyadic-Whitney cubes, we know
$$e^{-i}\leq C\diam Q_i=C\diam Q_{i+k}\leq C^2e^{-i-k}.$$
Thus, $e^k\leq C^2$, concluding the proof of claim.

By the choose of $\sigma$, Lemma~\ref{lambda} and the above claim, we infer that
\[\infty=\sum_i\int_{\sigma B^{Q_i}}\alpha(y) \mathrm{d}\nu_{1}(y)\leq C\int_\Omega \alpha(y) \mathrm{d}\nu_{1}(y)<\infty,\]
which is a contradiction. The proof is thus complete.

\end{proof}

\medskip
\textbf{Acknowledgments}. The authors would like to thank Prof. Chang-Yu Guo for posing this question and for many useful conservations. They also thank Lin Cao, Yu-Heng Lan and Li-Jie Fang for many useful conservations.

\end{document}